\numberwithin{equation}{section}
\theoremstyle{plain}
\newtheorem{theorem}{Theorem}[section]
\newtheorem{corollary}[theorem]{Corollary}
\newtheorem{lemma}[theorem]{Lemma}
\newtheorem{proposition}[theorem]{Proposition}
\newtheorem{definition}[theorem]{Definition}
\newtheorem{remark}[theorem]{Remark}
\newtheorem{example}[theorem]{Example}
\begin{document}

\title[Powers and roots of partial isometric covariant representations]{Powers and roots of partial isometric covariant representations}

\date{\today}
\author[Saini]{Dimple Saini\textsuperscript{*}}
\address{Centre for mathematical and Financial Computing, Department of Mathematics, The LNM Institute of Information Technology, Rupa ki Nangal, Post-Sumel, Via-Jamdoli Jaipur-302031, (Rajasthan) India}
\email{18pmt006@lnmiit.ac.in, dimple92.saini@gmail.com}
\author[Trivedi]{Harsh Trivedi}
\address{Centre for mathematical and Financial Computing, Department of Mathematics, The LNM Institute of Information Technology, Rupa ki Nangal, Post-Sumel, Via-Jamdoli Jaipur-302031, (Rajasthan) India}
\email{harsh.trivedi@lnmiit.ac.in, trivediharsh26@gmail.com}
\author[Veerabathiran]{Shankar Veerabathiran}
\address{Institute of Mathematical Sciences,
	A CI of Homi Bhabha National Institute,
	CIT Campus, Taramani, Chennai, 600113,
	Tamilnadu, INDIA.}
\email{shankarunom@gmail.com}


\begin{abstract}
	Isometric covariant representations play an important role in the study of Cuntz-Pimsner algebras. In this article, we study partial isometric covariant representations and explore under what conditions powers and roots of partial isometric covariant representations are also partial isometric covariant representations.
\end{abstract}

\keywords{Hilbert $C^*$-modules, covariant representations, Structure
	theorems of partial isometries, tuples of operators}
\subjclass[2010]{46L08, 47A65, 47L55, 47L80.}

\maketitle

\section{Introduction}
The classical result of Wold \cite{W38} asserts that: A given isometry on a Hilbert space is either a shift, or a unitary, or breaks uniquely as a direct summand of both. Let us denote by $B(\mathcal{H})$ the algebra of all bounded linear operators on a Hilbert space $\mathcal{H},$ and for $V\in B(\mathcal{H})$ we denote  the kernel of $V$ and the range of $V$ by $N(V)$ and $R(V)$, respectively.  A bounded linear operator $V$ on a Hilbert space is said to be {\it partial isometry} if it is isometry on the orthogonal complement of $N({V}).$  It is clear that $V$ is a partial isometry if and only if $VV^*$ is a (final) projection onto $R(V)$ if and only if $V^*V$ is a (initial) projection onto $R(V^*).$ It is well known that the sum and multiplication of partial isometries need not be partial isometries. Murray and von Neumann \cite{MN36} discussed the classification of factors using the sum of partial isometries. In the following result Erdelyi \cite{E68} studied when the product of two partial isometries is also a partial isometry: 
\begin{theorem}
	Suppose that $V_1$ and $V_2$ are two partial isometries acting on a Hilbert space $\mathcal{H}.$ Let $T=V_1V_2.$ Then the following conditions are equivalent:
\begin{itemize}
\item[(1)] $T$ is a partial isometry.
\item[(2)] Initial space of $V_1,$ $ R(V_1^*),$ is invariant under $P_{R(V_2)},$ that is,
$$V_2V_2^* R(V_1^*)\subseteq R(V_1^*).$$
\item[(3)] $R(V_2)$ is invariant under $P_{R(V_1^*)},$ that is,
$$V_1^*V_1 R(V_2)\subseteq R(V_2).$$
\item[(4)] The product $P_{R(V_1^*)}P_{R(V_2)}$ is idempotent operator.
\end{itemize}
\end{theorem}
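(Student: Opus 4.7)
The plan is to rephrase all four conditions as relations between the projections
\[
P := V_1^* V_1 \quad (\text{so } R(P) = R(V_1^*)), \qquad Q := V_2 V_2^* \quad (\text{so } R(Q) = R(V_2)),
\]
and then verify the equivalences algebraically in the $C^*$-algebra generated by $P$ and $Q$. The translation is immediate: invariance of the range of a projection $R$ under an operator $S$ is the identity $R S R = S R$, so condition (2) becomes $P Q P = Q P$, condition (3) becomes $Q P Q = P Q$, and condition (4) reads $(PQ)^2 = PQ$.

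To link (1) to the others, I would use the characterization that $T$ is a partial isometry iff $T^*T$ is a projection. Here $T^*T = V_2^* P V_2$ is automatically self-adjoint, so idempotency is the only constraint, and a direct computation reduces it to
\[
V_2^* P (I - Q) P V_2 = 0.
\]
Since $P(I-Q)P = \bigl[(I-Q)^{1/2} P\bigr]^* \bigl[(I-Q)^{1/2} P\bigr]$ is positive, this vanishing is equivalent to $(I-Q)^{1/2} P V_2 = 0$, hence to $P V_2 = Q P V_2$, which is exactly the assertion $P\, R(V_2) \subseteq R(V_2)$, i.e., condition (3). Every step reverses, so (1) $\Leftrightarrow$ (3).

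For the purely algebraic equivalences: taking adjoints in $P Q P = Q P$ gives $P Q P = P Q$, so (2) collapses to $P Q = Q P$, and the same manipulation shows (3) collapses to $P Q = Q P$; commutation then yields $(PQ)^2 = P^2 Q^2 = PQ$, which is (4). The main obstacle — the only genuinely non-formal step — is the converse (4) $\Rightarrow$ (3), where starting only from $P Q P Q = P Q$ one must recover self-adjointness of $PQ$. My plan there is to expand $\|QPQ x - PQ x\|^2$ and use both $(PQ)^2 = PQ$ and its adjoint $QPQP = QP$ (hence $QPQPQ = QPQ$), together with $P^2 = P$, to show that each of the three inner products collapses to $\langle x, QPQ\, x\rangle$, whereupon the norm vanishes and $QPQ = PQ$ emerges. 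With this identity in hand the chain (1) $\Leftrightarrow$ (3) $\Leftrightarrow$ (2) $\Leftrightarrow$ (4) is closed.
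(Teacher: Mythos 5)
Your proposal is correct, and all the steps check out: the translations of (2), (3), (4) into $PQP=QP$, $QPQ=PQ$, $(PQ)^2=PQ$ are right; the reduction of (1) to $V_2^*P(I-Q)PV_2=0$ and then to $PV_2=QPV_2$ via positivity is sound (note $R(Q)=R(V_2)$ is closed because $V_2$ is a partial isometry, so this really is condition (3)); and your norm expansion for (4) $\Rightarrow$ (3) does collapse term by term to $\langle x,QPQx\rangle$ using $QPQPQ=QPQ$. However, your route differs from the paper's. The paper quotes this theorem from Erdelyi and proves its module-level generalization (Theorem \ref{www01}, together with the two-factor Theorem 2.1): there the implications (1) $\Leftrightarrow$ (2) and (1) $\Leftrightarrow$ (3) are obtained by element-wise norm arguments --- a contradiction based on the strict norm decrease of a partial isometry off its initial space, and a direct computation of $\|\widetilde{T}^{(2)}\eta\|$ on $R(\widetilde{T}^{(2)*})$ --- while (3) is deduced from (2) by passing to adjoint operators, and the self-adjointness of the idempotent $\mathfrak{E}\mathfrak{F}$ is extracted from the abstract fact that a contractive idempotent on a Hilbert space is an orthogonal projection. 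You instead stay entirely inside the $C^*$-algebra generated by $P=V_1^*V_1$ and $Q=V_2V_2^*$; a pleasant byproduct is that (2) and (3) are each seen to be literally the commutation relation $PQ=QP$, so their mutual equivalence is immediate, and your expansion of $\|QPQx-PQx\|^2$ is in effect a hands-on proof, in this special case, of the same contractive-idempotent lemma the paper invokes. What the paper's formulation buys is that the range/kernel phrasing transfers verbatim to the covariant-representation setting ($\widetilde{V}^{(1)}$, $I_{E_1}\otimes\widetilde{V}^{(2)}$) and to products of $n$ factors by induction; what yours buys is brevity, no contradiction argument, and a transparent algebraic picture of what all four conditions really say.
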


 A bounded linear operator $V$ on a Hilbert space is called {\it power partial isometry} if $V^n$ is a partial isometry for all $n\ge 1.$
Halmos and Wallen \cite{HW69} explored power partial isometries and proved that every power partial isometry uniquely decomposes into a direct sum of a pure isometry, a unitary operator, a pure co-isometry and a truncated shift.  Ezzahraoui, Mbekhta, Salhi and Zerouali \cite{EMSZ18} characterized the powers of a partial isometry in terms of orthogonal complement of kernels. 	For $n\in \mathbb{N},$ we denote $I_n:=\{1,2,\dots,n\}.$

\begin{theorem}
	Suppose that $V$ is a partial isometry acting on a Hilbert space $\mathcal{H}.$ Then the following conditions are equivalent:
	\begin{itemize}
		\item[(1)] $V^k$ is a partial isometry for all $k\in I_n.$
		\item[(2)] $V(N(V^k)^{\perp})\subseteq N(V^{k-1})^{\perp}$ for all $k\in I_n.$
	\end{itemize} Moreover, if $V$ is a power partial isometry if and only if $V(N(V^{k+1})^{\perp})\subseteq N(V^{k})^{\perp}$ for all $k\ge 1.$
\end{theorem}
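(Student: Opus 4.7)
The plan is to prove the equivalence by induction on $n$. The base case $n=1$ is automatic: condition (1) just restates the hypothesis that $V$ is a partial isometry, while condition (2) at $k=1$ reads $V(N(V)^\perp)\subseteq N(V^0)^\perp=\mathcal{H}$ (taking $V^0=I$), which is trivially true.

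For the inductive step from $n$ to $n+1$, I will rely on two simple facts. First, the kernels are nested: $N(V^j)\subseteq N(V^k)$ for $j\le k$, so $N(V^k)^\perp\subseteq N(V^j)^\perp$. Second, a partial isometry $T$ preserves norm precisely on $N(T)^\perp$ — equivalently, $\|Ty\|=\|y\|$ forces $y\in N(T)^\perp$ — because $T^*T$ is the orthogonal projection onto $N(T)^\perp$. In the direction $(1)\Rightarrow(2)$, the only new case to verify is $k=n+1$: for $x\in N(V^{n+1})^\perp$, nestedness gives $x\in N(V)^\perp$, hence $\|Vx\|=\|x\|$; the partial isometry $V^{n+1}$ gives $\|V^{n+1}x\|=\|x\|$; chaining these produces $\|V^n(Vx)\|=\|Vx\|$, and since $V^n$ is a partial isometry by the inductive hypothesis, the norm-preservation principle forces $Vx\in N(V^n)^\perp$.

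For the direction $(2)\Rightarrow(1)$, the inductive hypothesis already gives that $V^k$ is a partial isometry for $k\le n$, so I only need to establish that $V^{n+1}$ is a partial isometry. Given $x\in N(V^{n+1})^\perp$, nestedness puts $x\in N(V)^\perp$ and yields $\|Vx\|=\|x\|$, while condition (2) at level $n+1$ places $Vx\in N(V^n)^\perp$, giving $\|V^{n+1}x\|=\|V^n(Vx)\|=\|Vx\|=\|x\|$. Hence $V^{n+1}$ is isometric on $N(V^{n+1})^\perp$ and is therefore a partial isometry.

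The moreover statement is immediate upon applying the equivalence for every $n\ge 1$: the reindexing $k\mapsto k+1$ in the stated condition is harmless because the $k=1$ case of condition (2) is vacuous. I do not foresee a serious obstacle here; the whole proof is a clean bookkeeping exercise tracking which subspaces of $\mathcal{H}$ the operators $V, V^n, V^{n+1}$ preserve norm on, fueled by the nested-kernel inclusion and the inductive hypothesis. The only subtle point is recognizing that for a partial isometry, norm-preservation on a vector is equivalent to that vector lying in the orthogonal complement of the kernel — without that equivalence, neither implication goes through.
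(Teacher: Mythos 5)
Your proof is correct. Note first that the paper itself does not prove this Hilbert--space statement (it is quoted from Ezzahraoui--Mbekhta--Salhi--Zerouali as background); the paper's own argument appears for the covariant-representation analogue, Theorem 3.3, whose proof is the natural point of comparison. Your $(2)\Rightarrow(1)$ direction is essentially the paper's: both use the nested-kernel inclusion $N(V^{n+1})^{\perp}\subseteq N(V)^{\perp}$ to get $\|Vx\|=\|x\|$, then hypothesis $(2)$ to place $Vx$ in $N(V^{n})^{\perp}$ and chain the two norm equalities. Where you genuinely diverge is $(1)\Rightarrow(2)$: the paper routes this through its Erdelyi-type product theorem (Theorem 2.3, the idempotence/range-invariance characterization of when a product of partial isometries is a partial isometry) followed by a separate range identity (Proposition 3.2) converting $(I\otimes\widetilde{V}\widetilde{V}^{*})N(\widetilde{V}_{n-1})\subseteq N(\widetilde{V}_{n-1})$ into the kernel-orthocomplement form. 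You instead argue directly from the equality case of $\|Ty\|^{2}=\langle T^{*}Ty,y\rangle=\|P_{N(T)^{\perp}}y\|^{2}$: since $\|V^{n}(Vx)\|=\|V^{n+1}x\|=\|x\|=\|Vx\|$ and $V^{n}$ is a partial isometry, $Vx$ must lie in $N(V^{n})^{\perp}$. This is the one nontrivial observation in your write-up, and you correctly flag it; it makes your proof self-contained and more elementary, at the cost of not exhibiting the link to the product/commuting-projections machinery that the paper needs anyway for its other results. Both arguments are sound, and your handling of the base case and of the reindexing in the ``moreover'' clause is fine.
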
 Gupta \cite{BCG80} analyzed the converse question and proved that 
if $V\in B(\mathcal{H})$ is a contraction with $N(V^2)=N(V),$ and if $V^n$ is a partial isometry for some $n\ge 2,$ then $V$ is a partial isometry.  Ezzahraoui, Mbekhta, Salhi and Zerouali \cite{EMSZ18} generalized the Gupta's result in the following way:
\begin{theorem}
	Suppose that $V$ is a contraction on a Hilbert space $\mathcal{H}.$ Let $V^n$ is a partial isometry for some $n\ge 2.$ Then the following conditions are equivalent:
	\begin{itemize}
		\item[(1)] $V$ is a partial isometry.
		\item[(2)] \begin{itemize}
			\item[(a)] For every $h\in N(V^n)\ominus N(V)$ such that $\|Vh\|=\|h\|.$
			\item[(b)] $V(N(V^n)^{\perp})$ is orthogonal to $V( N(V^n)\ominus N(V)).$
		\end{itemize}
	\end{itemize}
\end{theorem}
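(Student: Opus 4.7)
The plan is to exploit the orthogonal decomposition $\mathcal{H} = N(V) \oplus \bigl(N(V^n)\ominus N(V)\bigr) \oplus N(V^n)^\perp$, which is legitimate because $N(V)\subseteq N(V^n)$ for any operator. In particular $N(V)^\perp = \bigl(N(V^n)\ominus N(V)\bigr)\oplus N(V^n)^\perp$, and these two summands are orthogonal to each other inside $N(V)^\perp$. I would then treat the two directions separately.

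For $(1)\Rightarrow(2)$, I would use that a partial isometry satisfies $V^*V=P_{N(V)^\perp}$, so $V$ acts isometrically on $N(V)^\perp$. Since $N(V^n)\ominus N(V) \subseteq N(V)^\perp$, condition (a) is immediate. For (b), pick $h\in N(V^n)^\perp$ and $k\in N(V^n)\ominus N(V)$, and compute
$\langle Vh, Vk\rangle = \langle V^*Vh, k\rangle = \langle P_{N(V)^\perp}h, k\rangle = \langle h, P_{N(V)^\perp}k\rangle = \langle h, k\rangle$,
where the last step uses $k\in N(V)^\perp$. Since $h\in N(V^n)^\perp$ and $k\in N(V^n)$, this inner product vanishes.

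For $(2)\Rightarrow(1)$, I would show that $V$ is isometric on $N(V)^\perp$. Given $h\in N(V)^\perp$, write $h = h_1+h_2$ with $h_1\in N(V^n)\ominus N(V)$ and $h_2\in N(V^n)^\perp$. Assumption (b) yields $Vh_1\perp Vh_2$, hence $\|Vh\|^2 = \|Vh_1\|^2 + \|Vh_2\|^2$. Assumption (a) gives $\|Vh_1\|=\|h_1\|$. For the second summand, since $V^n$ is a partial isometry and $h_2\in N(V^n)^\perp$, we have $\|V^nh_2\|=\|h_2\|$; combined with the fact that $V$ is a contraction this produces the chain $\|h_2\| = \|V^nh_2\| \le \|V^{n-1}h_2\| \le \cdots \le \|Vh_2\| \le \|h_2\|$, which forces $\|Vh_2\|=\|h_2\|$. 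Using $h_1\perp h_2$, the two Pythagorean identities match up to give $\|Vh\|^2 = \|h_1\|^2+\|h_2\|^2 = \|h\|^2$, so $V$ is a partial isometry.

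The argument is essentially a bookkeeping exercise on the above three-piece decomposition; the only step requiring real input from the hypotheses (beyond elementary orthogonality) is the \emph{squeeze} $\|h_2\|\le \|Vh_2\|\le \|h_2\|$, which is where both the contraction assumption and the partial-isometry assumption on $V^n$ are used simultaneously. I anticipate no serious obstacle beyond identifying this decomposition and performing the self-adjoint projection manipulation cleanly in the forward direction.
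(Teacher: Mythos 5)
Your argument is correct and follows essentially the same route as the paper's proof of its Section~3 generalization of this theorem: the same decomposition $N(V)^{\perp}=\bigl(N(V^{n})\ominus N(V)\bigr)\oplus N(V^{n})^{\perp}$, the same squeeze $\|h_{2}\|=\|V^{n}h_{2}\|\le\cdots\le\|Vh_{2}\|\le\|h_{2}\|$ in the reverse direction, and the same Pythagorean bookkeeping. The only cosmetic difference is in $(1)\Rightarrow(2)(b)$, where you compute $\langle Vh,Vk\rangle$ directly from $V^{*}V=P_{N(V)^{\perp}}$ instead of deducing orthogonality from additivity of norms as the paper does; your version is if anything slightly cleaner, since the norm identity alone only controls the real part of the inner product.
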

In a noncommutative context, Cuntz \cite{C77} investigated a $C^*$-algebra, famously known as the Cuntz algebra, which is generated by row isometry. The tuple $(V_1,\dots,V_n)$ of isometries on a Hilbert space $\mathcal{ H}$ is said to be {\it row isometry} if $\sum_{i=1}^{n} V_iV_i^*=I_{\mathcal{ H}},$ or equivalently, orthogonal range. Frazho introduced the Wold decomposition for two isometries with orthogonal range, and Popescu extended this decomposition to encompass the scenario of an infinite sequence of isometries with orthogonal final spaces.  Pimsner in \cite{P97} extended the construction of Cuntz-Kreiger algebras using isometric covariant representations of $C^*$-correspondences. Therefore the setup of isometric  covariant representations provides a general framework to study row isometries using the tensor product notion of $C^*$-correspondences. A celebrated theorem due to Pimsner \cite{P97} states that a covariant representation $V$ of a $C^*$-correspondence $E$ extends to a $C^*$-representation of the Toeplitz algebra
	of $E$ if and only if $V$ is isometric. 
	
	 Muhly and Solel derived an analogue of, Popescu's Wold decomposition \cite{POP89} for a row isometry known as the Wold decomposition, for isometric covariant representations in \cite{MS99} which is based on Fock module approach. In \cite{Ami11}, Viselter studied powers of partial isometric covariant representations and introduced relatively isometric covariant representations to explore conditions for a covariant representation of a subproduct system to extend to a $C^*$-representation of the Toeplitz algebra. Our aim in this article is to study conditions so that powers and roots of partial isometric covariant representations are also partial isometric covariant representations.
	
	The section-wise plan is as follows: In Section 1, we recall the basic setup and definitions. In Section 2, we study hypothesis under which the multiplication of partial isometric representations of $C^*$-correspondences becomes partial isometric representations. In Section 3, we analyze under which assumptions the powers and roots of partial isometric representations become partial isometric representations.

\subsection{Preliminaries and Notations}

Here, we will review some basic concepts from \cite{L95, MS98, MS99, P97}.	
Let $E$ be a Hilbert $C^*$-module over a $C^*$-algebra $\mathcal{B}.$ It is called $C^*$-correspondence over $\mathcal{B}$ if $E$ has a left $\mathcal{B}$-module action by a non-zero $*$-homomorphism $\phi:\mathcal B\to \mathcal L(E),$ that is, $ b\xi :=\phi(b)\xi$ for all $b\in\mathcal B$ and $\xi\in E,$ where $\mathcal L(E)$ is the collection of all adjointable operators on $E.$ If the closed linear span of $\{\langle z,w\rangle : z,w\in E \}$ equals $\mathcal{B},$ then we say that $E$ is {\it full}. In this paper, each $*$-homomorphism considered is nondegenerate, that is, the closed linear span of $\phi(\mathcal{B})E$ equals $E.$ The following definition of completely bounded covariant representations plays an important role in this paper:

\begin{definition}
	Let map $V:E\to B(\mathcal H)$ be linear and $\sigma:\mathcal B\to B(\mathcal H)$ be a representation. Then the pair $(\sigma,V)$ is
	said to be a {\rm  covariant representation} (cf. \cite{MS98}) of $E$ on $\mathcal H$  if 
	\[
	V(b\xi c)=\sigma(b)V(\xi)\sigma(c) \quad \quad (\xi\in E,
	b,c\in\mathcal B).
	\] 
	Then $(\sigma,V)$ is called {\rm completely bounded covariant representation} (simply say, {\rm c.b.c. representation})  if $V$ is completely bounded. Further, $(\sigma, V)$ is called {\rm isometric} if $V(\xi)^*V(\eta)=\sigma(\langle \xi,\eta \rangle)$ for all $\xi,\eta\in E.$
\end{definition}

The following lemma is due to Muhly and Solel \cite{MS98} which is important to classify the c.b.c. representations of a $C^*$-correspondence:

\begin{lemma} \label{MSC}
	The map $(\sigma, V)\mapsto \widetilde{V}$ provides a bijection between the set of all c.b.c. (respectively, completely contractive covariant) representations $(\sigma, V)$ of $E$ on $\mathcal H$ and the set of all bounded (respectively,
	contractive) linear maps $\widetilde{V}:E\otimes_{\sigma} \mathcal H\to \mathcal
	H$ defined by
	\[
	\widetilde{V}(\eta\otimes h)=V(\eta)h \quad \quad (
	h\in\mathcal H,\eta\in E),
	\] such that $\sigma(b)\widetilde{V}=\widetilde{V}(\phi(b)\otimes I_{\mathcal
		H})$ for all $b\in\mathcal B$. Moreover, $\widetilde{V}$ is {\rm isometry} if and only if $(\sigma, V)$ is isometric.
\end{lemma}

For every $n\in \mathbb{N}_0(=\mathbb{N}\cup \{0\})$,
$E^{\otimes n} =E\otimes_{\phi} \dots \otimes_{\phi}E$ ($n$-times) (here $E^{\otimes 0} =\mathcal{B}$) is a $C^*$-correspondence over $\mathcal{B}$, with the left module action  of $\mathcal{B}$ on $E^{\otimes n}$ defined as  $$\phi_n(b)(\xi_1 \otimes \dots \otimes \xi_n)=b\xi_1\otimes \dots \otimes\xi_n, \: \:\:\: b\in \mathcal{B},\xi_i \in E.$$
For  $n\in \mathbb{N},$ define $\widetilde{V}_n : E^{\otimes n}\otimes \mathcal{H} \to \mathcal{H}$ by $$\widetilde{V}_n (\xi_1 \otimes \dots \otimes \xi_n \otimes h) = V (\xi_1) \dots V(\xi_n) h, \quad \xi_i \in E, h \in \mathcal H.$$ 
The \emph{Fock module} of $E$ (cf. \cite{F02}), $\mathcal{F}(E)= \bigoplus_{n \geq 0}E^{\otimes n},$ is a $C^*$-correspondence over $\mathcal{B},$ where the left module action  of $\mathcal{B}$ on $\mathcal{F}(E)$ is defined  by $$\phi_{\infty}(b)\left(\oplus_{n \geq 0}\xi_n\right)=\oplus_{n \geq 0}\phi_n(b)\xi_n , \:\: \xi_n \in E^{\otimes n}.$$
For $\xi \in E,$  the \emph{creation operator} $V_{\xi}$ on $\mathcal{F}(E)$ is defined by $$V_{\xi}(\eta)=\xi \otimes \eta, \:\: \eta \in E^{\otimes n}, n\ge 0.$$ 

The following theorem is due to Pimsner \cite{P97}.

\begin{theorem} 
	Let $\mathcal{T}{(E)}$ be a $C^*$-algebra in $\mathcal{L}(\mathcal{F}(E)),$ called {\rm Toeplitz algebra}, generated by $\{V_{\xi}\}_{\xi\in E}$ and $\{\phi_{\infty}(b)\}_{b\in \mathcal{B}}.$ 
	Let $(\sigma,V)$ be an isometric covariant representation of $E$ on $\mathcal H.$ Then the map $$\begin{cases}
		V_{\xi}\mapsto V(\xi), &   \xi\in E; \\
		\phi_{\infty}(b)\mapsto \sigma(b), &   b\in \mathcal{B}
	\end{cases}$$ extends uniquely to a $C^*$-representation of $\mathcal{T}(E)$ on $\mathcal{H}.$ Conversely, suppose that $\pi: \mathcal{T}(E)\mapsto B(\mathcal{H})$ is a $C^*$-representation, and if $V(\xi)$ is defined to be $\pi(V_{\xi})$ for $\xi\in E,$ and $\sigma(b)$ is defined to be $\pi(\phi_{\infty}(b))$ for $b\in \mathcal{B},$ then $(\sigma,V)$ is an isometric covariant representation of $E$ on $\mathcal{H}.$
\end{theorem}

The following notion of partial isometric covariant representation plays an important role in this paper:

\begin{definition}	
Let $(\sigma,V)$  be a  c.b.c. representation of ${E}$ on $\mathcal{H}$. We say that $(\sigma,V)$ is {\rm partial isometric} if $\|\widetilde{V}x\|=\|x\|$ for all $x\in N(\widetilde{V})^{\perp}$. 
\end{definition}

Let $(\sigma,V)$ be a c.b.c. representation of $E$ on $\mathcal{H}$ with closed range. The {\it Moore-Penrose inverse} $\widetilde{V}^{\dagger}$ of $(\sigma,V)$ \cite{AHS22} is defined as the unique solution of the following four equations: \begin{equation*}
	\widetilde{V}\widetilde{V}^{\dagger}\widetilde{V}=\widetilde{V},\quad \widetilde{V}^{\dagger}\widetilde{V}\widetilde{V}^{\dagger}=\widetilde{V}^{\dagger},\quad (\widetilde{V}\widetilde{V}^{\dagger})^*=\widetilde{V}\widetilde{V}^{\dagger},\quad( \widetilde{V}^{\dagger}\widetilde{V})^*=\widetilde{V}^{\dagger}\widetilde{V}.
\end{equation*}
For $n\in \mathbb{N},$ define $\widetilde{V}^{\dagger (n)}:\mathcal{H}\to E^{\otimes n}\otimes \mathcal{H}$ by $$\widetilde{V}^{\dagger(n)}:= (I_{E^{\otimes n-1}}\otimes \widetilde{V}^{\dagger})(I_{E^{\otimes n-2}}\otimes\widetilde{V}^{\dagger})\dots(I_{E}\otimes \widetilde{V}^{\dagger})\widetilde{V}^{\dagger}.$$

\begin{remark}\cite{K11}
	Let $(\sigma,V)$ be a c.b.c. representation of $E$ on $\mathcal{H}$. Then the following conditions are equivalent:
	\begin{enumerate}
		\item  $\widetilde{V}$ is a partial isometry.   $\:\:\:\:\:\:\:\:\:\:\:\:\:\:\:\:\:\:$   $(4)$ $\widetilde{V}^*\widetilde{V}=P_{{R}(\widetilde{V}^*)}$ (initial
		projection of $\widetilde{V}$).
		\item $\widetilde{V}^*$ is a partial isometry.   $\:\:\:\:\:\:\:\:\:\:\:\:\:\:\:\:$   $(5)$ $\widetilde{V}\widetilde{V}^*=P_{{R}(\widetilde{V})}$  (final
		projection of $\widetilde{V}$).
		\item   $\widetilde{V}\widetilde{V}^*\widetilde{V}=\widetilde{V}.$    $\:\:\:\:\:\:\:\:\:\:\:\:\:\:\:\:\:\:\:\:\:\:\:\:\:\:\:\:\:\:\:\:\:\:\:\:\:\:\:\:\:$    $(6)$ $\widetilde{V}^{\dagger}=\widetilde{V}^*.$  
		\end{enumerate}
\end{remark}

\begin{remark}
	Let $(\sigma,V)$ be a partial isometric representation of $E$ on $\mathcal{H}.$ If $\mathcal{K}$ is a $(\sigma,V)$-reducing subspace of $\mathcal{H},$ then $(\sigma,V)|_{\mathcal{K}}$ is also a partial isometric representation (see \cite[Lemma 1]{HW69}) of $E$ on $\mathcal{K}.$
		
\end{remark}


\begin{definition}
	A c.b.c. representation $(\sigma,V)$ of $E$ on $\mathcal{H}$ is called {\rm power partial isometric} representation if each $\widetilde{V}_n$ is a partial isometry for all $n\in \mathbb{N}.$
\end{definition}

\section{Product of Partial isometric covariant representations}
 In this section, we will discuss the product of partial isometric representations of $C^*$-correspondences.  Let $(\sigma , V^{(1)})$ and $(\sigma, V^{(2)})$ be two partial isometric representations of $C^*$-correspondences $E_1$ and $E_2$ on $\mathcal{H},$ respectively. Define a c.b.c. representation $(\sigma,V)$ of $E_1\otimes E_2$ on $\mathcal{H}$ by $\widetilde{V}:=\widetilde{V}^{(1)}(I_{E_1} \otimes \widetilde{V}^{(2)}).$ If $\widetilde{V}^{(1)}(I_{E_1}\otimes \widetilde{V}^{(2)}\widetilde{V}^{(2)^*})=\widetilde{V}^{(2)}\widetilde{V}^{(2)^*}\widetilde{V}^{(1)},$ then $(\sigma,V)$ is a partial isometric representation. Indeed,
 \begin{align*}
 	\widetilde{V}\widetilde{V}^*\widetilde{V}&=\widetilde{V}^{(1)}(I_{E_1} \otimes \widetilde{V}^{(2)}\widetilde{V}^{(2)^*})\widetilde{V}^{(1)^*}\widetilde{V}^{(1)}(I_{E_1} \otimes \widetilde{V}^{(2)})=\widetilde{V}^{(2)}\widetilde{V}^{(2)^*}\widetilde{V}^{(1)}\widetilde{V}^{(1)^*}\widetilde{V}^{(1)}(I_{E_1} \otimes \widetilde{V}^{(2)})\\&=\widetilde{V}^{(2)}\widetilde{V}^{(2)^*}\widetilde{V}^{(1)}(I_{E_1} \otimes \widetilde{V}^{(2)})=\widetilde{V}^{(1)}(I_{E_1}\otimes \widetilde{V}^{(2)}\widetilde{V}^{(2)^*}\widetilde{V}^{(2)})=\widetilde{V}.
 \end{align*}

 The following result is a generalization of \cite[Lemma 2]{HW69}.
 \begin{theorem}
 	Let $(\sigma , V^{(1)})$ and $(\sigma, V^{(2)})$ be two partial isometric representations of $C^*$-correspondences $E_1$ and $E_2$ on $\mathcal{H},$ respectively. Then the following conditions are equivalent:
 	\begin{enumerate}
 		\item  $\widetilde{V}$ = $\widetilde{V}^{(1)}(I_{E_1} \otimes \widetilde{V}^{(2)})$ is a partial isometry.
 		\item  The projections $\mathfrak{E}:=\widetilde{V}^{{(1)}^*}\widetilde{V}^{(1)}$ and $\mathfrak{F}:=(I_{E_1} \otimes{\widetilde{V}^{(2)}}\widetilde{V}^{{(2)}^*})$ commute.
 	\end{enumerate}
 \end{theorem}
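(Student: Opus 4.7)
The plan is to use the characterization from the Remark that $\widetilde{V}$ is a partial isometry if and only if $\widetilde{V}\widetilde{V}^*\widetilde{V}=\widetilde{V}$. First I would unfold using $\widetilde{V}=\widetilde{V}^{(1)}(I_{E_1}\otimes \widetilde{V}^{(2)})$:
\[
\widetilde{V}\widetilde{V}^*\widetilde{V}
=\widetilde{V}^{(1)}(I_{E_1}\otimes \widetilde{V}^{(2)}\widetilde{V}^{(2)^*})\widetilde{V}^{(1)^*}\widetilde{V}^{(1)}(I_{E_1}\otimes \widetilde{V}^{(2)})
=\widetilde{V}^{(1)}\mathfrak{F}\mathfrak{E}(I_{E_1}\otimes \widetilde{V}^{(2)}),
\]
so the whole question reduces to comparing $\widetilde{V}^{(1)}\mathfrak{F}\mathfrak{E}(I_{E_1}\otimes \widetilde{V}^{(2)})$ with $\widetilde{V}^{(1)}(I_{E_1}\otimes \widetilde{V}^{(2)})$.

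For $(2)\Rightarrow(1)$, assuming $\mathfrak{E}\mathfrak{F}=\mathfrak{F}\mathfrak{E}$, I would swap the order of the projections and use the two partial-isometry identities $\widetilde{V}^{(1)}\mathfrak{E}=\widetilde{V}^{(1)}$ and $\widetilde{V}^{(2)}\widetilde{V}^{(2)^*}\widetilde{V}^{(2)}=\widetilde{V}^{(2)}$ to collapse
\[
\widetilde{V}^{(1)}\mathfrak{F}\mathfrak{E}(I_{E_1}\otimes \widetilde{V}^{(2)})
=\widetilde{V}^{(1)}\mathfrak{F}(I_{E_1}\otimes \widetilde{V}^{(2)})
=\widetilde{V}^{(1)}(I_{E_1}\otimes \widetilde{V}^{(2)})=\widetilde{V},
\]
which is the partial-isometry condition.

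For $(1)\Rightarrow(2)$, from $\widetilde{V}^{(1)}\mathfrak{F}\mathfrak{E}(I_{E_1}\otimes \widetilde{V}^{(2)})=\widetilde{V}^{(1)}(I_{E_1}\otimes \widetilde{V}^{(2)})$ I would insert $\mathfrak{E}$ on the left via $\widetilde{V}^{(1)}=\widetilde{V}^{(1)}\mathfrak{E}$ to obtain
\[
\widetilde{V}^{(1)}(\mathfrak{E}\mathfrak{F}\mathfrak{E}-\mathfrak{E})(I_{E_1}\otimes \widetilde{V}^{(2)})=0.
\]
Since $\mathfrak{E}(\mathfrak{E}\mathfrak{F}\mathfrak{E}-\mathfrak{E})=\mathfrak{E}\mathfrak{F}\mathfrak{E}-\mathfrak{E}$, this error term lands in $R(\mathfrak{E})$, on which $\widetilde{V}^{(1)}$ is isometric and thus injective, so I can cancel $\widetilde{V}^{(1)}$ to get $(\mathfrak{E}\mathfrak{F}\mathfrak{E}-\mathfrak{E})(I_{E_1}\otimes \widetilde{V}^{(2)})=0$. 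Because $R(I_{E_1}\otimes \widetilde{V}^{(2)})=R(\mathfrak{F})$ (the final projection of $\widetilde{V}^{(2)}$ is $\widetilde{V}^{(2)}\widetilde{V}^{(2)^*}$), this yields $(\mathfrak{E}\mathfrak{F}\mathfrak{E}-\mathfrak{E})\mathfrak{F}=0$, i.e., $(\mathfrak{E}\mathfrak{F})^2=\mathfrak{E}\mathfrak{F}$. The idempotent $\mathfrak{E}\mathfrak{F}$ has norm at most one, hence is an orthogonal projection, giving $\mathfrak{E}\mathfrak{F}=(\mathfrak{E}\mathfrak{F})^*=\mathfrak{F}\mathfrak{E}$.

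The step I expect to be the main obstacle is the cancellation in $(1)\Rightarrow(2)$: one must carefully arrange the factors so that the difference operator takes values in $R(\mathfrak{E})$ (so that injectivity of $\widetilde{V}^{(1)}|_{R(\mathfrak{E})}$ can be invoked) and identify the range of $I_{E_1}\otimes \widetilde{V}^{(2)}$ with $R(\mathfrak{F})$ so as to end with the purely projection-theoretic identity $(\mathfrak{E}\mathfrak{F})^2=\mathfrak{E}\mathfrak{F}$, from which the standard fact about contractive idempotents delivers commutativity.
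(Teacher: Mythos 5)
Your proposal is correct and follows essentially the same route as the paper: both directions reduce to showing that $\mathfrak{E}\mathfrak{F}$ is a contractive idempotent, hence self-adjoint, hence equal to $\mathfrak{F}\mathfrak{E}$. The only cosmetic difference is in $(1)\Rightarrow(2)$, where the paper obtains $(\mathfrak{E}\mathfrak{F})^2=\mathfrak{E}\mathfrak{F}$ more directly by pre-multiplying the identity $\widetilde{V}\widetilde{V}^*\widetilde{V}=\widetilde{V}$ by $\widetilde{V}^{(1)^*}$ and post-multiplying by $(I_{E_1}\otimes\widetilde{V}^{(2)^*})$, which avoids your (valid, but unnecessary) cancellation argument via injectivity of $\widetilde{V}^{(1)}$ on $R(\mathfrak{E})$.
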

 \begin{proof}
 	Since $\mathfrak{E}$ and $\mathfrak{F}$ commute, we get
 	\begin{align*}
 		\widetilde{V}\widetilde{V}^*\widetilde{V} &= \widetilde{V}^{{(1)}}(I_{E_1} \otimes \widetilde{V}^{{(2)}}\widetilde{V}^{{(2)}^*}) \widetilde{V}^{{(1)}^*}\widetilde{V}^{{(1)}}(I_{E_1} \otimes \widetilde{V}^{{(2)}}) =\widetilde{V}^{{(1)}}\mathfrak{F}\mathfrak{E}(I_{E_1} \otimes \widetilde{V}^{{(2)}}) \\&= \widetilde{V}^{{(1)}}\mathfrak{E}\mathfrak{F}(I_{E_1} \otimes \widetilde{V}^{{(2)}}) = \widetilde{V}^{(1)}(I_{E_1} \otimes \widetilde{V}^{(2)})= \widetilde{V}.
 	\end{align*}
 	Conversely, suppose that $\widetilde{V}$ = $\widetilde{V}\widetilde{V}^*\widetilde{V},$ then 
 	$\widetilde{V}^{{(1)}}\mathfrak{F}\mathfrak{E}(I_{E_1} \otimes \widetilde{V}^{{(2)}})= \widetilde{V}^{(1)}(I_{E_1} \otimes \widetilde{V}^{(2)}),$
 	and hence
 	\begin{align*}
 		\mathfrak{E}\mathfrak{F}\mathfrak{E}\mathfrak{F} = \widetilde{V}^{{(1)}^*}\widetilde{V}^{{(1)}}\mathfrak{F}\mathfrak{E}(I_{E_1} \otimes \widetilde{V}^{{(2)}}\widetilde{V}^{{(2)}^*})=\widetilde{V}^{{(1)}^*}\widetilde{V}^{(1)}(I_{E_1} \otimes \widetilde{V}^{(2)}\widetilde{V}^{{(2)}^*})=\mathfrak{E}\mathfrak{F}.
 	\end{align*}
 	Therefore $\mathfrak{E}\mathfrak{F}$ is idempotent. Since $\|\mathfrak{E}\mathfrak{F}\|$ $\le 1,$ we have $\mathfrak{E}\mathfrak{F}$ is self-adjoint, and thus $\mathfrak{E}$ and $\mathfrak{F}$ commute.
 \end{proof}

 The following result is regarding the product of a finite number of partial isometric representations which is a generalization of \cite[Theorem 2]{E68}:

\begin{theorem}\label{www01}
	Let $E_1,\dots, E_n$ be $C^*$-correspondences. For each $i\in I_n,$ let $(\sigma , V^{(i)})$ be a partial isometric representation of $E_i$ on $\mathcal{H}.$ For $i\in I_n,$ we define a c.b.c. representation $(\sigma,T^{(i)})$ of a $C^*$-correspondence $E_1\otimes \dots \otimes E_i$ on $\mathcal{H}$ by $$T^{(i)}(\xi_1\otimes\xi_2\otimes \dots\otimes\xi_i)h=V^{(1)}(\xi_1)V^{(2)}(\xi_2)\dots V^{(i)}(\xi_i)h\quad \quad (\xi_1\in E_1,\dots, \xi_i\in E_i,h\in \mathcal{H} ).$$
	Then the following conditions are equivalent:
	\begin{enumerate}
		\item  $(\sigma,T^{(2)}),\dots,(\sigma,T^{(n)})$  are  partial isometric representations.
		\item Range of $\widetilde{T}^{(i-1)^*}$ is invariant under $P_{{R}(I_{E_1\otimes \dots \otimes E_{i-1}}\otimes\widetilde{V}^{(i)})}$, that is, $$(I_{E_1\otimes \dots \otimes E_{i-1}}\otimes\widetilde{V}^{(i)}\widetilde{V}^{(i)^*}){R}(\widetilde{T}^{(i-1)^*}) \subseteq {R}(\widetilde{T}^{(i-1)^*})\quad for \quad all \quad i\in I_{n}\setminus \{1\}. $$
		\item Range of $(I_{E_1\otimes \dots \otimes E_i}\otimes\widetilde{V}^{(i+1)})$ is invariant under $P_{{R}(\widetilde{T}^{(i)^*})}$, that is, $$\widetilde{T}^{(i)^*}\widetilde{T}^{(i)}{R}(I_{E_1\otimes \dots \otimes E_i}\otimes\widetilde{V}^{(i+1)}) \subseteq {R}(I_{E_1\otimes \dots \otimes E_i}\otimes\widetilde{V}^{(i+1)})\quad for \quad all \quad i\in I_{n-1}.$$
		\item The operator products $P_{{R}(\widetilde{T}^{(i)^*})}P_{{R}(I_{E_1\otimes \dots \otimes E_i}\otimes\widetilde{V}^{(i+1)})} , i\in I_{n-1}$ are idempotent.
	\end{enumerate}
 \end{theorem}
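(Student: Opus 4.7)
The plan is to reduce the theorem to an iterated application of Erdelyi's classical result (Theorem~1.1 of the paper), exploiting the recursive factorisation of $\widetilde{T}^{(i)}$.

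The first step is to verify, by evaluating on an elementary tensor $\xi_1 \otimes \cdots \otimes \xi_i \otimes h$, that
\[
\widetilde{T}^{(i)} \;=\; \widetilde{T}^{(i-1)}\bigl(I_{E_1\otimes \cdots \otimes E_{i-1}} \otimes \widetilde{V}^{(i)}\bigr), \qquad i \geq 2.
\]
Since each $\widetilde{V}^{(i)}$ is a partial isometry (as $(\sigma,V^{(i)})$ is partial isometric), its amplification $I_{E_1\otimes \cdots \otimes E_{i-1}} \otimes \widetilde{V}^{(i)}$ is again a partial isometry. The base case $\widetilde{T}^{(1)} = \widetilde{V}^{(1)}$ is partial isometric by hypothesis.

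The argument then proceeds by induction on $i$. Suppose inductively that $\widetilde{T}^{(i-1)}$ is a partial isometry. Applying Theorem~1.1 with $V_1 := \widetilde{T}^{(i-1)}$ and $V_2 := I_{E_1\otimes \cdots \otimes E_{i-1}} \otimes \widetilde{V}^{(i)}$ yields the four-fold equivalence between
\begin{itemize}
\item[(a)] $\widetilde{T}^{(i)}$ is a partial isometry;
\item[(b)] $V_2 V_2^{*}\,R(V_1^{*}) \subseteq R(V_1^{*})$, i.e.\ $(I\otimes \widetilde{V}^{(i)}\widetilde{V}^{(i)*})\,R(\widetilde{T}^{(i-1)*}) \subseteq R(\widetilde{T}^{(i-1)*})$;
\item[(c)] $V_1^{*} V_1\,R(V_2) \subseteq R(V_2)$, i.e.\ $\widetilde{T}^{(i-1)*}\widetilde{T}^{(i-1)}\,R(I\otimes \widetilde{V}^{(i)}) \subseteq R(I\otimes \widetilde{V}^{(i)})$;
\item[(d)] $P_{R(V_1^{*})}\,P_{R(V_2)}$ is idempotent.
\end{itemize}
These are precisely conditions (2) at index $i$ and (3), (4) at index $i-1$ of the theorem (via the reindexing $j = i+1$ in the theorem's (3) and (4)). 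Iterating this equivalence for $i = 2, \dots, n$ gives the simultaneous equivalence of the four global conditions. Condition (1) for the full range of indices is what feeds the inductive hypothesis at the next step, so the chain closes.

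The one technical point that will require care is that $\widetilde{T}^{(i-1)}$ and $I_{E_1\otimes \cdots \otimes E_{i-1}} \otimes \widetilde{V}^{(i)}$ act between \emph{different} Hilbert spaces (namely $E_1\otimes \cdots \otimes E_{i-1}\otimes \mathcal{H}$ and $E_1\otimes \cdots \otimes E_i\otimes \mathcal{H}$), whereas Theorem~1.1 is phrased for two partial isometries on a single Hilbert space. I would handle this by passing to the common Hilbert space $\mathcal{K} := \bigoplus_{j=0}^{n} E_1\otimes \cdots \otimes E_j \otimes \mathcal{H}$ (with $j=0$ giving $\mathcal{H}$) and extending each factor by zero on the remaining summands; the ranges of the extended operators and their adjoints coincide (as subspaces of $\mathcal{K}$) with the ranges of the original operators, so Theorem~1.1 applies verbatim and the resulting inclusions/commutations descend to the original setting unchanged. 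This is the only non-algebraic bookkeeping needed; everything else is a direct translation of Erdelyi's four equivalent conditions into the covariant-representation language.
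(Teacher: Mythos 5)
Your proposal is correct and follows essentially the same route as the paper: the same recursive factorisation $\widetilde{T}^{(i)}=\widetilde{T}^{(i-1)}\bigl(I_{E_1\otimes\cdots\otimes E_{i-1}}\otimes\widetilde{V}^{(i)}\bigr)$ and the same induction reducing everything to the two-factor case. The only difference is one of packaging: the paper re-derives the four two-factor equivalences by direct computation with the identity $\widetilde{V}\widetilde{V}^{*}\widetilde{V}=\widetilde{V}$ (Erdelyi's argument transcribed to the module setting, where the mismatch of domain and codomain causes no difficulty), whereas you import Erdelyi's theorem verbatim via the direct-sum embedding --- a valid but strictly optional detour.
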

\begin{proof}
 We will prove it using the mathematical induction. For $n = 2$ we have 
 
 	$(1)\Rightarrow (2)$ We will prove it using contradiction. Suppose $(2)$ does not hold, then there exists a nonzero element $\eta \in {R}(\widetilde{T}^{(2)^*})=(I_{E_1} \otimes\widetilde{V}^{(2)^*})R(\widetilde{V}^{(1)^*})$ such that $\zeta=(I_{E_1} \otimes\widetilde{V}^{(2)})\eta \notin {R}(\widetilde{V}^{(1)^*}).$
 Note that $\langle \zeta,\zeta\rangle$= $\langle(I_{E_1} \otimes\widetilde{V}^{(2)})\eta,(I_{E_1} \otimes\widetilde{V}^{(2)})\eta\rangle$=$\langle \eta,\eta\rangle.$ Since $\widetilde{V}_2$ is a partial isometry we have	$$\langle \eta,\eta\rangle=\langle \widetilde{T}^{(2)}\eta,\widetilde{T}^{(2)}\eta\rangle=\langle \widetilde{V}^{(1)}\zeta,\widetilde{V}^{(1)}\zeta\rangle < \langle \zeta,\zeta\rangle=\langle \eta,\eta\rangle.$$
 This is a contradiction, so $(I_{E_1} \otimes\widetilde{V}^{(2)}\widetilde{V}^{(2)^*}){R}(\widetilde{V}^{(1)^*}) \subseteq {R}(\widetilde{V}^{(1)^*}).$
 
 $(2)\Rightarrow (1)$ Suppose $(2)$ holds, then $\widetilde{V}^{(1)^*}\widetilde{V}^{(1)}$ is the orthogonal projection onto $(I_{E_1}\otimes\widetilde{V}^{(2)}){R}(\widetilde{T}^{(2)^*}).$ Let  $\eta \in {R}(\widetilde{T}^{(2)^*})$ $\subseteq$ ${R}(I_{E_1}\otimes\widetilde{V}^{(2)^*})$ we have  $$\langle\widetilde{T}^{(2)}\eta,\widetilde{T}^{(2)}\eta\rangle=\langle \widetilde{V}^{(1)}(I_{E_1} \otimes\widetilde{V}^{(2)})\eta,\widetilde{V}^{(1)}(I_{E_1} \otimes\widetilde{V}^{(2)})\eta\rangle=\langle (I_{E_1} \otimes\widetilde{V}^{(2)})\eta,(I_{E_1} \otimes\widetilde{V}^{(2)})\eta\rangle=\langle \eta,\eta\rangle.$$ 
 
 $(1)\Rightarrow (3)$ Suppose that $\widetilde{T}^{(2)}$ is a partial isometry, then $\widetilde{T}^{(2)^*}$ is also a partial isometry. Apply condition $(2)$ to the adjoint $\widetilde{T}^{(2)^*}=(I_{E_1}\otimes\widetilde{V}^{(2)^*})\widetilde{V}^{(1)^*},$ we get $(3).$

 $(3)\Rightarrow (1)$ Suppose $(3)$ holds, then $(I_{E_1}\otimes\widetilde{V}^{(2)}\widetilde{V}^{(2)^*})$ is the orthogonal projection onto $\widetilde{V}^{(1)^*}{R}(\widetilde{T}^{(2)})$. Let $h\in {R}(\widetilde{T}^{(2)}) \subseteq {R}(\widetilde{V}^{(1)})$ we get  $$\langle \widetilde{T}^{(2)^*}h,\widetilde{T}^{(2)^*}h\rangle=\langle(I_{E_1}\otimes\widetilde{V}^{(2)^*})\widetilde{V}^{(1)^*}h,(I_{E_1}\otimes\widetilde{V}^{(2)^*})\widetilde{V}^{(1)^*}h\rangle=\langle\widetilde{V}^{(1)^*}h,\widetilde{V}^{(1)^*}h\rangle=\langle h,h\rangle.$$ Therefore $\widetilde{T}^{(2)^*}$ is a partial isometry, and hence $\widetilde{T}^{(2)}$ is a partial isometry.

 $(1)\Rightarrow (4)$ Suppose that $\widetilde{T}^{(2)}$ is a partial isometry, then $\widetilde{T}^{(2)}=\widetilde{T}^{(2)}\widetilde{T}^{(2)^*}\widetilde{T}^{(2)},$ that is, $$\widetilde{V}^{(1)}(I_{E_1}\otimes\widetilde{V}^{(2)})=\widetilde{V}^{(1)}(I_{E_1} \otimes \widetilde{V}^{(2)}\widetilde{V}^{(2)^*})\widetilde{V}^{(1)^*}\widetilde{V}^{(1)}(I_{E_1} \otimes \widetilde{V}^{(2)}).$$ Pre-multiply by $\widetilde{V}^{(1)^*}$ and post-multiply by $(I_{E_1}\otimes\widetilde{V}^{(2)^*})$ in the last inequality, we get $$\widetilde{V}^{(1)^*}\widetilde{V}^{(1)}(I_{E_1}\otimes\widetilde{V}^{(2)}\widetilde{V}^{(2)^*})=\widetilde{V}^{(1)^*}\widetilde{V}^{(1)}(I_{E_1} \otimes \widetilde{V}^{(2)}\widetilde{V}^{(2)^*})\widetilde{V}^{(1)^*}\widetilde{V}^{(1)}(I_{E_1} \otimes \widetilde{V}^{(2)}\widetilde{V}^{(2)^*}).$$ This implies that $P_{{R}(\widetilde{V}^{(1)^*})}P_{{R}(I_{E_1}\otimes \widetilde{V}^{(2)})}$ is idempotent.
 
 $(4)\Rightarrow (1)$ Since  $P_{R(\widetilde{V}_1^*)}P_{R(I_{E_1}\otimes \widetilde{V}_2)}$ is idempotent, we have 
 \begin{align*}
 \widetilde{T}^{(2)}\widetilde{T}^{(2)^*}\widetilde{T}^{(2)}&=\widetilde{V}^{(1)}(I_{E_1} \otimes \widetilde{V}^{(2)}\widetilde{V}^{(2)^*})\widetilde{V}^{(1)^*}\widetilde{V}^{(1)}(I_{E_1} \otimes \widetilde{V}^{(2)}) \\&=\widetilde{V}^{(1)}\widetilde{V}^{(1)^*}\widetilde{V}^{(1)}(I_{E_1} \otimes \widetilde{V}^{(2)}\widetilde{V}^{(2)^*})\widetilde{V}^{(1)^*}\widetilde{V}^{(1)}(I_{E_1} \otimes \widetilde{V}^{(2)}\widetilde{V}^{(2)^*}\widetilde{V}^{(2)})\\&=\widetilde{V}^{(1)}\widetilde{V}^{(1)^*}\widetilde{V}^{(1)}(I_{E_1} \otimes \widetilde{V}^{(2)}\widetilde{V}^{(2)^*}\widetilde{V}^{(2)})=\widetilde{V}^{(1)}(I_{E_1}\otimes \widetilde{V}^{(2)})= \widetilde{T}^{(2)}.
 \end{align*}
 Therefore $ \widetilde{T}^{(2)}$ is a partial isometry.
 Suppose that $(1), (2),(3)$ and $(4)$ are equivalent for $n = m.$ Consider
	$$ \widetilde{T}^{(m+1)}= \widetilde{T}^{(m)}(I_{E_1\otimes \dots \otimes E_m}\otimes\widetilde{V}^{(m+1)}),$$ where $\widetilde{V}^{(m+1)}$ is a partial isometry. Then $\widetilde{T}^{(m+1)}$ is a partial isometry  if and only if any of the following conditions hold:
	\begin{itemize}
		\item[(1)] $(I_{E_1\otimes \dots \otimes E_m}\otimes\widetilde{V}^{(m+1)}\widetilde{V}^{(m+1)^*}){R}(\widetilde{T}^{(m)^*}) \subseteq {R}(\widetilde{T}^{(m)^*}).$\\
		
		\item[(2)] $\widetilde{T}^{(m)^*}\widetilde{T}^{(m)}{R}(I_{E_1\otimes \dots \otimes E_m}\otimes\widetilde{V}^{(m+1)}) \subseteq {R}(I_{E_1\otimes \dots \otimes E_m}\otimes\widetilde{V}^{(m+1)}).$\\
		
		\item[(3)] $(P_{{R}(\widetilde{T}^{(m)^*})}P_{{R}(I_{E_1\otimes \dots \otimes E_m}\otimes\widetilde{V}^{(m+1)})})^2=P_{{R}(\widetilde{T}^{(m)^*})}P_{{R}(I_{E_1\otimes \dots \otimes E_m}\otimes\widetilde{V}^{(m+1)})}.$
	\end{itemize}
This completes the proof.
	\end{proof}

\begin{remark}
	Let $E_1,\dots, E_n$ be $C^*$-correspondences. For each $i\in I_n,$ let $(\sigma , V^{(i)})$ be a partial isometric representation of $E_i$ on $\mathcal{H}.$ Define a c.b.c. representation $(\sigma,T^{(n)})$ of a $C^*$-correspondence $E_1\otimes \dots \otimes E_n$ on  $\mathcal{H}$ by $$T^{(n)}(\xi_1\otimes\xi_2\otimes\dots\otimes\xi_n)h:=V^{(1)}(\xi_1)V^{(2)}(\xi_2)\dots V^{(n)}(\xi_n)h\quad \quad (\xi_1\in E_1,\dots, \xi_n\in E_n,h\in \mathcal{H} ).$$ Then ${\widetilde{T}^{(n)}}=\widetilde{V}^{(1)}(I_{E_1}\otimes \widetilde{V}^{(2)})\dots(I_{E_1\otimes \dots \otimes E_{n-1}}\otimes \widetilde{V}^{(n)})$ is a partial isometry if and only if $${\widetilde{T}^{(n)^{\dagger}}}=(I_{E_1\otimes \dots \otimes E_{n-1}}\otimes\widetilde{V}^{(n)^{\dagger}})\dots (I_{E_1}\otimes\widetilde{V}^{(2)^{\dagger}})\widetilde{V}^{(1)^{\dagger}}.$$
\end{remark}
\begin{proof}
	Let ${\widetilde{T}^{(n)}}$ be a partial isometry, then ${\widetilde{T}^{(n)^{\dagger}}}= {\widetilde{T}^{(n)^*}}.$ It follows that
	\begin{align*}
		{\widetilde{T}^{(n)^{\dagger}}}&= {\widetilde{T}^{(n)^*}}=(I_{E_1\otimes \dots \otimes E_{n-1}}\otimes\widetilde{V}^{(n)^*})\dots \widetilde{V}^{(1)^*}=(I_{E_1\otimes \dots \otimes E_{n-1}}\otimes\widetilde{V}^{(n)^{\dagger}})\dots \widetilde{V}^{(1)^{\dagger}}.
	\end{align*}
	
	Conversely, suppose that ${\widetilde{T}^{(n)^{\dagger}}}=(I_{E_1\otimes \dots \otimes E_{n-1}}\otimes\widetilde{V}^{(n)^{\dagger}})\dots (I_{E_1}\otimes\widetilde{V}^{(2)^{\dagger}})\widetilde{V}^{(1)^{\dagger}},$ then  
	\begin{align*}
		{\widetilde{T}^{(n)^*}} &=(I_{E_1\otimes \dots \otimes E_{n-1}}\otimes\widetilde{V}^{(n)^*})\dots \widetilde{V}^{(1)^*} =(I_{E_1\otimes \dots \otimes E_{n-1}}\otimes\widetilde{V}^{(n)^{\dagger}})\dots \widetilde{V}^{(1)^{\dagger}}={\widetilde{T}^{(n)^{\dagger}}}.
	\end{align*}
	Thus ${\widetilde{T}^{(n)}}$ is a partial isometry.
\end{proof}

Let $(\sigma , V^{(1)})$ be a completely contractive covariant representation of $E_1$ on $\mathcal{H}.$ Then the operator matrix $\begin{bmatrix}
	\widetilde{V}^{(1)} & (I_H - \widetilde{V}^{(1)}\widetilde{V}^{(1)^*})^{1/2} \\
	0 & 0 \\
\end{bmatrix}: (E_1 \otimes \mathcal{H}) \oplus \mathcal{H}\to  \mathcal{H} \oplus \mathcal{H}$ is a partial isometry.

\begin{theorem}
	Let $(\sigma , V^{(1)})$ and $(\sigma, V^{(2)})$ be two completely contractive covariant representations of $E_1$ and $E_2$ on $\mathcal{H},$ respectively. 
 Then the operator matrix $$ M= \begin{bmatrix}
 \widetilde{V}^{(1)}(I_{E_1} \otimes \widetilde{V}^{(2)}) & \widetilde{V}^{(1)}(I_{E_1} \otimes (I_H - \widetilde{V}^{(2)}\widetilde{V}^{(2)^*}))^{1/2} \\
 0 & 0 \\
 \end{bmatrix}$$ is a partial isometry if and only if $(\sigma,V^{(1)})$ is a partial isometric representation.
\end{theorem}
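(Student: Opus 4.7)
The plan is to show directly that $MM^*M = M$ is equivalent to $\widetilde{V}^{(1)}\widetilde{V}^{(1)^*}\widetilde{V}^{(1)} = \widetilde{V}^{(1)}$, by exploiting the identity $BB^* + (I_H - BB^*) = I_H$, where $B := \widetilde{V}^{(2)}$. Writing $A := \widetilde{V}^{(1)}$ and $P := I_{E_1}\otimes(I_H - BB^*)$ for brevity, the two nonzero entries of $M$ are $A(I_{E_1}\otimes B)$ and $AP^{1/2}$, and the key observation is
\begin{equation*}
(I_{E_1}\otimes B)(I_{E_1}\otimes B^*) + P = I_{E_1\otimes\mathcal{H}}.
\end{equation*}

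First I will compute $MM^*$. The off-diagonal and bottom-right blocks vanish trivially because the second row of $M$ is zero, and the top-left block collapses to $A\bigl[(I_{E_1}\otimes BB^*) + P\bigr]A^* = AA^*$ by the identity above. Hence $MM^* = \begin{bmatrix} AA^* & 0 \\ 0 & 0 \end{bmatrix}$, and consequently
\begin{equation*}
MM^*M = \begin{bmatrix} AA^*A(I_{E_1}\otimes B) & AA^*AP^{1/2} \\ 0 & 0 \end{bmatrix}.
\end{equation*}

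Next I will set $T := AA^*A - A$; then $MM^*M = M$ is equivalent to the pair $T(I_{E_1}\otimes B) = 0$ and $TP^{1/2} = 0$. Right-multiplying these by $(I_{E_1}\otimes B^*)T^*$ and by $P^{1/2}T^*$ respectively, and adding, yields
\begin{equation*}
0 = T\bigl[(I_{E_1}\otimes BB^*) + P\bigr]T^* = TT^*,
\end{equation*}
which forces $T = 0$. Thus $\widetilde{V}^{(1)}\widetilde{V}^{(1)^*}\widetilde{V}^{(1)} = \widetilde{V}^{(1)}$, so by the characterization of partial isometric representations recorded above, $(\sigma, V^{(1)})$ is a partial isometric representation. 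The converse direction is immediate: assuming $AA^*A = A$, both $T(I_{E_1}\otimes B)$ and $TP^{1/2}$ vanish, and the block expression for $MM^*M$ recovers $M$ entry by entry.

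There is no serious obstacle here; the essential ingredient is the identity $BB^* + (I_H - BB^*) = I_H$, which plays a dual role, simultaneously collapsing $MM^*$ to a single nontrivial block and amplifying the two separate vanishing conditions on $T$ into the single relation $TT^* = 0$. The only subtle point worth flagging is that $P^{1/2}$ is well-defined and self-adjoint because $B$ is a contraction (via Lemma~\ref{MSC}), so $I_H - BB^*$ is a positive operator and the continuous functional calculus applies.
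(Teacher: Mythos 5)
Your proof is correct and follows essentially the same route as the paper: both arguments hinge on the identity $(I_{E_1}\otimes \widetilde{V}^{(2)}\widetilde{V}^{(2)^*}) + (I_{E_1}\otimes(I_H-\widetilde{V}^{(2)}\widetilde{V}^{(2)^*})) = I_{E_1\otimes\mathcal{H}}$, which collapses $MM^*$ to the single block $\widetilde{V}^{(1)}\widetilde{V}^{(1)^*}$ and recombines the two block conditions into the single relation $\widetilde{V}^{(1)}\widetilde{V}^{(1)^*}\widetilde{V}^{(1)}=\widetilde{V}^{(1)}$. The only (cosmetic) difference is that the paper works with $M^*MM^*=M^*$ and adds the two entry equations directly, whereas you work with $MM^*M=M$ and pass through $TT^*=0$.
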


\begin{proof}
	Suppose that  $(\sigma,V^{(1)})$ is a partial isometric representation. Note that	\begin{align*} MM^*
		= \begin{bmatrix}
			\widetilde{V}^{(1)}\widetilde{V}^{(1)^*}    &     0 \\
			0      &        0 
		\end{bmatrix}.
	\end{align*} Since $\widetilde{V}^{(1)} \widetilde{V}^{(1)^*}$ is a projection, $MM^*$ is also a projection. Thus $M$ is a partial isometry.
	
	Conversely, suppose that $M$ is a partial isometry, then $M^*MM^*=M^*.$ Since
\begin{align*} M^*MM^*
		= \begin{bmatrix}
			(I_{E_1}  \otimes \widetilde{V}^{(2)^*})\widetilde{V}^{(1)^*}\widetilde{V}^{(1)}\widetilde{V}^{(1)^*}    &     0 \\
			(I_{E_1} \otimes (I_H - \widetilde{V}^{(2)}\widetilde{V}^{(2)^*}))^{1/2}\widetilde{V}^{(1)^*}\widetilde{V}^{(1)}\widetilde{V}^{(1)^*}    &        0 
		\end{bmatrix}
	\end{align*}and 
		\begin{align*} M^*
		= \begin{bmatrix}
			(I_{E_1} \otimes \widetilde{V}^{(2)^*})\widetilde{V}^{(1)^*}    &     0 \\
			(I_{E_1} \otimes (I_H - \widetilde{V}^{(2)}\widetilde{V}^{(2)^*}))^{1/2}\widetilde{V}^{(1)^*}    &        0 
		\end{bmatrix},
	\end{align*}
	we get
	\begin{align}\label{www02}
		(I_{E_1} \otimes \widetilde{V}^{(2)^*})\widetilde{V}^{(1)^*}  = 	(I_{E_1}  \otimes \widetilde{V}^{(2)^*})\widetilde{V}^{(1)^*}\widetilde{V}^{(1)}\widetilde{V}^{(1)^*}
	\end{align} and
	\begin{align}\label{www03}
		(I_{E_1} \otimes (I_H - \widetilde{V}^{(2)}\widetilde{V}^{(2)^*}))^{1/2}\widetilde{V}^{(1)^*} = (I_{E_1} \otimes (I_H - \widetilde{V}^{(2)}\widetilde{V}^{(2)^*}))^{1/2}\widetilde{V}^{(1)^*}\widetilde{V}^{(1)}\widetilde{V}^{(1)^*}.
	\end{align}
	Now pre-multiply by $(I_{E_1} \otimes \widetilde{V}^{(2)})$ in Equation (\ref{www02}) and  $(I_{{E_1} \otimes \mathcal{H}} - (I_{E_1}  \otimes \widetilde{V}^{(2)}\widetilde{V}^{(2)^*}))^{1/2}$  in Equation (\ref{www03}), we get
	$\widetilde{V}^{(1)^*} = \widetilde{V}^{(1)^*}\widetilde{V}^{(1)}\widetilde{V}^{(1)^*}.$ Therefore  $(\sigma,V^{(1)})$ is a partial isometric representation.
\end{proof} 

\section{Powers and roots of partial isometric representations}

In this section we will discuss the powers and roots of partial isometric representations. From Theorem \ref{www01}, we proved that if $(\sigma,V)$  is a partial isometric representation of $E$ on $\mathcal{H}$ such that $(I_{E^{\otimes(i-1)}}\otimes\widetilde{V}\widetilde{V}^*){N}(\widetilde{V}_{i-1}) \subseteq {N}(\widetilde{V}_{i-1})$ for every $i \le n$ , then
$\widetilde{V}_i$ is a partial isometry for every $i \le n.$

 The following result gives an equivalent condition of Theorem \ref{www01}, which is an analogue of \cite[Proposition 1]{EMSZ18}.

\begin{proposition}\label{www04}
	 Let $(\sigma,V)$ be a partial isometric representation of a $C^*$-correspondence $E$ on $\mathcal{H}.$ For $n\in \mathbb{N},$ the following conditions are equivalent:
	\begin{enumerate}
		\item $(I_{E^{\otimes(n-1)}}\otimes\widetilde{V}){N}(\widetilde{V}_{n})^{\perp} \subseteq {N}(\widetilde{V}_{n-1})^{\perp}.$
		\item $(I_{E^{\otimes(n-1)}}\otimes\widetilde{V}\widetilde{V}^*){N}(\widetilde{V}_{n-1}) \subseteq {N}(\widetilde{V}_{n-1}).$
	\end{enumerate}
\end{proposition}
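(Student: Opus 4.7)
The plan is to reduce both inclusions to a single statement about the auxiliary operator
\[
Q := I_{E^{\otimes(n-1)}}\otimes\widetilde{V} \,:\, E^{\otimes n}\otimes\mathcal{H} \longrightarrow E^{\otimes(n-1)}\otimes\mathcal{H}.
\]
Two observations set this up. First, directly from the definition of $\widetilde{V}_n$ on elementary tensors one reads off the factorisation $\widetilde{V}_n = \widetilde{V}_{n-1}\, Q$. Second, since $(\sigma,V)$ is partial isometric, $\widetilde{V}\widetilde{V}^*$ is a projection, so $QQ^* = I_{E^{\otimes(n-1)}}\otimes \widetilde{V}\widetilde{V}^*$ is the (self-adjoint) projection that appears in condition $(2)$.

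With this in hand, I would first dualise $(1)$ in the standard way. The inclusion $Q\bigl(N(\widetilde{V}_n)^{\perp}\bigr)\subseteq N(\widetilde{V}_{n-1})^{\perp}$ says that $\langle Qx,y\rangle=0$ for every $x\in N(\widetilde{V}_n)^{\perp}$ and every $y\in N(\widetilde{V}_{n-1})$, i.e.\ $\langle x,Q^{*}y\rangle = 0$ for every such $x$. Since $N(\widetilde{V}_n)$ is closed, this is equivalent to $Q^{*}y\in N(\widetilde{V}_n)$ for all $y\in N(\widetilde{V}_{n-1})$; that is,
\[
(1)\ \Longleftrightarrow\ Q^{*}\bigl(N(\widetilde{V}_{n-1})\bigr)\subseteq N(\widetilde{V}_n).
\]

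To finish, I would plug in the factorisation $\widetilde{V}_n=\widetilde{V}_{n-1}Q$: for any $y\in N(\widetilde{V}_{n-1})$ one has $Q^{*}y\in N(\widetilde{V}_n)$ iff $\widetilde{V}_{n-1}(QQ^{*}y)=0$ iff $QQ^{*}y\in N(\widetilde{V}_{n-1})$. Therefore $(1)$ is equivalent to $QQ^{*}\bigl(N(\widetilde{V}_{n-1})\bigr)\subseteq N(\widetilde{V}_{n-1})$, which is exactly $(2)$. I do not foresee a genuine obstacle; the only delicate point is that the whole argument relies on $QQ^{*}$ being a self-adjoint projection, which is precisely what the partial isometric hypothesis on $(\sigma,V)$ supplies, and on the closedness of $N(\widetilde{V}_n)$ so that $\bigl(N(\widetilde{V}_n)^{\perp}\bigr)^{\perp}=N(\widetilde{V}_n)$.
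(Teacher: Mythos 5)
Your proof is correct, and it takes a slightly different route from the paper's. The paper computes the set on the left of $(1)$ directly: it writes $N(\widetilde{V}_n)^{\perp}$ as $\widetilde{V}_n^*\mathcal{H}$, uses $\widetilde{V}_n^*=(I_{E^{\otimes(n-1)}}\otimes\widetilde{V}^*)\widetilde{V}_{n-1}^*$ to get $(I_{E^{\otimes(n-1)}}\otimes\widetilde{V})N(\widetilde{V}_n)^{\perp}=(I_{E^{\otimes(n-1)}}\otimes\widetilde{V}\widetilde{V}^*)N(\widetilde{V}_{n-1})^{\perp}$, and then passes from invariance of $N(\widetilde{V}_{n-1})^{\perp}$ under the self-adjoint operator $I_{E^{\otimes(n-1)}}\otimes\widetilde{V}\widetilde{V}^*$ to invariance of $N(\widetilde{V}_{n-1})$. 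Your version dualises the inclusion itself, turning $(1)$ into $Q^*\bigl(N(\widetilde{V}_{n-1})\bigr)\subseteq N(\widetilde{V}_n)$ and then applying the factorisation $\widetilde{V}_n=\widetilde{V}_{n-1}Q$; this buys you two things. First, you avoid the identification $N(\widetilde{V}_n)^{\perp}=\widetilde{V}_n^*\mathcal{H}$, which is only true up to closure (the paper glosses over this, harmlessly but still), whereas your argument needs only the closedness of kernels. Second, your chain of equivalences never actually uses that $\widetilde{V}\widetilde{V}^*$ is a projection, or even that $(\sigma,V)$ is partial isometric, so you have in fact proved the equivalence for an arbitrary c.b.c.\ representation. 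Your closing remark that ``the whole argument relies on $QQ^*$ being a self-adjoint projection'' is therefore a misdiagnosis of your own proof --- no step invokes that property --- but this does not affect the validity of the argument.
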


\begin{proof}
	For $n\in \mathbb{N},$ we have
\begin{align*}
(I_{E^{\otimes(n-1)}}\otimes\widetilde{V}){N}(\widetilde{V}_{n})^{\perp}  &=(I_{E^{\otimes(n-1)}}\otimes\widetilde{V})\widetilde{V}_n^*\mathcal{H}=(I_{E^{\otimes(n-1)}}\otimes \widetilde{V}\widetilde{V}^*)\widetilde{V}_{n-1}^*\mathcal{H}\\&=(I_{E^{\otimes(n-1)}}\otimes\widetilde{V}\widetilde{V}^*){N}(\widetilde{V}_{n-1})^{\perp} .
\end{align*} It gives $(1)$ is equivalent to $(2).$
\end{proof}

The next result is a combination of Theorem \ref{www01} and Proposition \ref{www04}.

\begin{theorem}\label{www05}
	Let $(\sigma,V)$ be a partial isometric representation of $E$ on  $\mathcal{H}$. Then the following conditions are equivalent:
	\begin{enumerate}
		\item $\widetilde{V}_i$  is a partial isometry for every $i\in I_n.$
		\item $(I_{E^{\otimes(i-1)}}\otimes\widetilde{V}){N}(\widetilde{V}_{i})^{\perp} \subseteq {N}(\widetilde{V}_{i-1})^{\perp}$  for every $i\in I_n.$
	\end{enumerate}
	Moreover, $(\sigma,V)$ is a power partial isometric representation of $E$ on $\mathcal{H}$ if and only if $(I_{E^{\otimes i}}\otimes\widetilde{V}){N}(\widetilde{V}_{i+1})^{\perp} \subseteq {N}(\widetilde{V}_{i})^{\perp}$ for every $i\in \mathbb{N}.$
\end{theorem}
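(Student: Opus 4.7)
The plan is to obtain this theorem as a direct specialization of Theorem \ref{www01} combined with the computation underlying Proposition \ref{www04}. I would apply Theorem \ref{www01} with the uniform choice $E_i = E$ and $(\sigma, V^{(i)}) = (\sigma, V)$ for every $i \in I_n$. Under this identification the induced representation $T^{(i)}$ coincides with the $i$-fold representation of $E^{\otimes i}$, so $\widetilde{T}^{(i)} = \widetilde{V}_i$, and the standing assumption that $(\sigma, V)$ is partial isometric supplies the premise of Theorem \ref{www01}.

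Granted this setup, Theorem \ref{www01} yields that $\widetilde{V}_i$ is a partial isometry for every $i \in I_n$ if and only if
$$(I_{E^{\otimes(i-1)}}\otimes\widetilde{V}\widetilde{V}^*)\, R(\widetilde{V}_{i-1}^*) \subseteq R(\widetilde{V}_{i-1}^*), \qquad i \in I_n \setminus \{1\}.$$
Since $I_{E^{\otimes(i-1)}}\otimes\widetilde{V}\widetilde{V}^*$ is a bounded orthogonal projection, this invariance passes to the closure $\overline{R(\widetilde{V}_{i-1}^*)} = N(\widetilde{V}_{i-1})^{\perp}$ and is equivalent to
$(I_{E^{\otimes(i-1)}}\otimes\widetilde{V}\widetilde{V}^*)\,N(\widetilde{V}_{i-1})^{\perp} \subseteq N(\widetilde{V}_{i-1})^{\perp}$.
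The identity extracted in the proof of Proposition \ref{www04}, namely
$$(I_{E^{\otimes(i-1)}}\otimes\widetilde{V})\,N(\widetilde{V}_i)^{\perp} = (I_{E^{\otimes(i-1)}}\otimes\widetilde{V}\widetilde{V}^*)\,N(\widetilde{V}_{i-1})^{\perp},$$
then converts this into the containment $(I_{E^{\otimes(i-1)}}\otimes\widetilde{V})\,N(\widetilde{V}_i)^{\perp} \subseteq N(\widetilde{V}_{i-1})^{\perp}$, which is exactly condition $(2)$. The case $i = 1$ is immediate and recovers the standing hypothesis that $\widetilde{V}_1 = \widetilde{V}$ is partial isometric.

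For the \emph{Moreover} clause, $(\sigma,V)$ is power partial isometric precisely when condition $(1)$ of the theorem holds for every $n \in \mathbb{N}$; applying the finite equivalence just established for each such $n$ gives the equivalence with the invariance $(I_{E^{\otimes i}}\otimes\widetilde{V})\,N(\widetilde{V}_{i+1})^{\perp} \subseteq N(\widetilde{V}_{i})^{\perp}$ for all $i \in \mathbb{N}$. The only delicate point to monitor is the interchange between $R(\widetilde{V}_{i-1}^*)$ and its closure $N(\widetilde{V}_{i-1})^{\perp}$ when invoking Theorem \ref{www01}, but continuity of the projection $I_{E^{\otimes(i-1)}}\otimes\widetilde{V}\widetilde{V}^*$ makes this automatic, so no substantive obstacle arises beyond careful bookkeeping of the identifications.
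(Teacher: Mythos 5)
Your overall strategy --- specialize Theorem \ref{www01} to $E_i=E$ and $V^{(i)}=V$, then translate its range-invariance condition into the kernel-orthocomplement condition via the identity in Proposition \ref{www04} --- is exactly the paper's route for the implication $(1)\Rightarrow(2)$, and that half of your argument is sound: once each $\widetilde{V}_{i-1}$ is known to be a partial isometry, $R(\widetilde{V}_{i-1}^*)$ is closed and equals $N(\widetilde{V}_{i-1})^{\perp}$, so the two invariance conditions coincide and the chain of identifications you describe goes through.

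The gap is in the converse direction $(2)\Rightarrow(1)$. You assert that invariance of $R(\widetilde{V}_{i-1}^*)$ under the projection $I_{E^{\otimes(i-1)}}\otimes\widetilde{V}\widetilde{V}^*$ is \emph{equivalent} to invariance of its closure $N(\widetilde{V}_{i-1})^{\perp}$, with continuity supplying the equivalence. Continuity gives only one implication: if the (possibly non-closed) range is invariant, so is its closure. The reverse implication fails in general --- a bounded projection can send an element of a dense subspace of an invariant closed subspace outside that dense subspace --- and it is precisely the reverse implication you need in order to pass from condition $(2)$ of Theorem \ref{www05} to condition $(2)$ of Theorem \ref{www01}. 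Nor may you assume $R(\widetilde{V}_{i-1}^*)$ is closed at this stage: $\widetilde{V}_{i-1}$ is a product of partial isometries, and such a product need not have closed range unless it is itself a partial isometry, which is what is to be proved. The repair is an induction on $i$ (so that when you treat index $i$ you already know $\widetilde{V}_{i-1}$ is a partial isometry, hence $R(\widetilde{V}_{i-1}^*)=N(\widetilde{V}_{i-1})^{\perp}$); the paper in fact bypasses Theorem \ref{www01} entirely in this direction and argues inductively by a norm computation: for $\eta\in N(\widetilde{V}_{i})^{\perp}\subseteq N(I_{E^{\otimes(i-1)}}\otimes\widetilde{V})^{\perp}$, hypothesis $(2)$ places $(I_{E^{\otimes(i-1)}}\otimes\widetilde{V})\eta$ in $N(\widetilde{V}_{i-1})^{\perp}$, whence $\|\widetilde{V}_{i}\eta\|=\|\widetilde{V}_{i-1}(I_{E^{\otimes(i-1)}}\otimes\widetilde{V})\eta\|=\|(I_{E^{\otimes(i-1)}}\otimes\widetilde{V})\eta\|=\|\eta\|$. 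With that fix the ``Moreover'' clause follows as you indicate.
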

\begin{proof}
	We will prove it using the mathematical induction. For $n=1,$ it is obvious. Suppose that our assumption holds for $n=m.$ Now, we want to prove it is true for $n=m+1.$ Let $\eta\in$ ${N}(\widetilde{V}_{m+1})^{\perp}$ $\subseteq$ ${N}(I_{E^{\otimes m}}\otimes \widetilde{V})^{\perp},$ since $(I_{E^{\otimes m}}\otimes\widetilde{V}){N}(\widetilde{V}_{m+1})^{\perp} \subseteq {N}(\widetilde{V}_{m})^{\perp},$ we have  $(I_{E^{\otimes m}}\otimes\widetilde{V})\eta\in{N}(\widetilde{V}_{m})^{\perp}$. Since $(I_{E^{\otimes m}}\otimes\widetilde{V})$ and $\widetilde{V}_m$ are partial isometries, we get $$\|\widetilde{V}_{m+1}\eta\|=\|\widetilde{V}_m(I_{E^{\otimes m}} \otimes \widetilde{V})\eta\|=\|(I_{E^{\otimes m}} \otimes \widetilde{V})\eta\|=\|\eta\|.$$
	Therefore $\widetilde{V}_{m+1}$ is a partial isometry.
	
	Conversely, suppose that $\widetilde{V}_i$ is a partial isometry for $i\in I_{m+1}.$ From Theorem \ref{www01}, we obtain
	$$(I_{E^{\otimes(i-1)}}\otimes\widetilde{V}\widetilde{V}^*){N}(\widetilde{V}_{i-1}) \subseteq {N}(\widetilde{V}_{i-1})  \quad \quad for \quad  all \quad i\in I_{m+1}.$$ From Proposition \ref{www04}, $(I_{E^{\otimes(i-1)}}\otimes\widetilde{V}){N}(\widetilde{V}_{i})^{\perp} \subseteq {N}(\widetilde{V}_{i-1})^{\perp}$ for every $i\in I_{m+1}.$
\end{proof}

\begin{corollary}
	Let $(\sigma,V)$ be a partial isometric representation of $E$ on  $\mathcal{H}$ such that $\widetilde{V}_n$  is a partial isometry
	for some $n\ge 2.$ If $(I_{E^{\otimes n}}\otimes\widetilde{V}){N}(\widetilde{V}_{n+1})^{\perp} \subseteq {N}(\widetilde{V}_{n})^{\perp}$, then $\widetilde{V}_{n+1}$ is a partial
	isometry.
\end{corollary}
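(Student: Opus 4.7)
The plan is to extract the one-step inductive argument already carried out inside the proof of Theorem \ref{www05} and apply it directly. The key algebraic identity is the factorization $\widetilde{V}_{n+1} = \widetilde{V}_n(I_{E^{\otimes n}}\otimes \widetilde{V})$, which expresses the $(n+1)$-st generalized power as a composition of two maps, each of which is already known to be a partial isometry: $\widetilde{V}_n$ by assumption, and $I_{E^{\otimes n}}\otimes \widetilde{V}$ because the amplification of a partial isometry by the identity on a $C^*$-correspondence remains a partial isometry.

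The strategy is then to show $\|\widetilde{V}_{n+1}\eta\|=\|\eta\|$ for every $\eta\in N(\widetilde{V}_{n+1})^{\perp}$. For this I would first record the inclusion $N(\widetilde{V}_{n+1})^{\perp}\subseteq N(I_{E^{\otimes n}}\otimes \widetilde{V})^{\perp}$, which follows from $\widetilde{V}_{n+1}^{*}=(I_{E^{\otimes n}}\otimes \widetilde{V}^{*})\widetilde{V}_{n}^{*}$ and therefore $R(\widetilde{V}_{n+1}^{*})\subseteq R(I_{E^{\otimes n}}\otimes \widetilde{V}^{*})$. Next, the standing hypothesis $(I_{E^{\otimes n}}\otimes \widetilde{V})N(\widetilde{V}_{n+1})^{\perp}\subseteq N(\widetilde{V}_{n})^{\perp}$ places $(I_{E^{\otimes n}}\otimes \widetilde{V})\eta$ inside $N(\widetilde{V}_{n})^{\perp}$, so both partial isometries act isometrically on the vectors that appear. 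Chaining these gives
\[
\|\widetilde{V}_{n+1}\eta\|=\|\widetilde{V}_{n}(I_{E^{\otimes n}}\otimes \widetilde{V})\eta\|=\|(I_{E^{\otimes n}}\otimes \widetilde{V})\eta\|=\|\eta\|,
\]
as desired.

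There is no real obstacle here; the argument is essentially the successor step of the induction used to prove Theorem \ref{www05}, rephrased as a standalone statement. The only small point requiring care is confirming the inclusion $N(\widetilde{V}_{n+1})^{\perp}\subseteq N(I_{E^{\otimes n}}\otimes \widetilde{V})^{\perp}$, since without it one would only know that $I_{E^{\otimes n}}\otimes \widetilde{V}$ preserves norms on a possibly larger subspace than the one containing $\eta$.
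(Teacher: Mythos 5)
Your proposal is correct and matches the paper's treatment: the corollary is exactly the successor step of the induction in the proof of Theorem \ref{www05}, namely writing $\widetilde{V}_{n+1}=\widetilde{V}_n(I_{E^{\otimes n}}\otimes\widetilde{V})$, noting $N(\widetilde{V}_{n+1})^{\perp}\subseteq N(I_{E^{\otimes n}}\otimes\widetilde{V})^{\perp}$, and chaining the two norm equalities. Your extra care in justifying that inclusion (via $R(\widetilde{V}_{n+1}^{*})\subseteq R(I_{E^{\otimes n}}\otimes\widetilde{V}^{*})$, or equivalently $N(I_{E^{\otimes n}}\otimes\widetilde{V})\subseteq N(\widetilde{V}_{n+1})$) is a point the paper asserts without comment, so nothing is missing.
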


\begin{definition}
	Let $(\sigma,V)$ be a c.b.c. representation of ${E}$ on $\mathcal H.$ We say that $(\sigma,V)$ is {\rm regular} if $N(\widetilde{V})\subseteq {E} \otimes{ {{R}^{\infty}{({\widetilde{V}})}}}$ and its range $R(\widetilde{V})$ is closed, where ${ {{R}^{\infty}{({\widetilde{V}})}}}=\bigcap_{n \geq 0} { {{R}{(\widetilde{V}_n)}}}$ is the {\rm generalized range} of $(\sigma,V).$
\end{definition}

The next result characterizes the regular c.b.c. representations which is from \cite[Theorem 2.2]{AHS22}.
\begin{theorem}
Suppose that $(\sigma,V)$ is a c.b.c. representation of ${E}$ on $\mathcal H.$ Then, for all $ m, n \in \mathbb{N} $, the following conditions are equivalent:
	\begin{enumerate}
		\item $ N(\widetilde{V}) \subseteq (I_{E} \otimes \widetilde{V}_m)(E^{\otimes(m+1) }\otimes \mathcal{H}).$
		\item $ N({\widetilde{V}}_n) \subseteq(I_{E^{\otimes n}} \otimes \widetilde{V}_m)(E^{\otimes(n+m) }\otimes \mathcal{H}).$
		\item $ N({\widetilde{V}}_n) \subseteq (I_{E^{\otimes n}} \otimes \widetilde{V})(E^{\otimes(n+1) }\otimes \mathcal{H}).$
		\item $N(\widetilde{V}_{n}) = (I_{E^{\otimes n}} \otimes \widetilde{V}_m)N (\widetilde{V}_{m+n}).$
	\end{enumerate}
	
\end{theorem}

Let $(\sigma,V)$ be a c.b.c. representation of ${E}$ on $\mathcal H.$ A bounded operator  ${S}:\mathcal{H} \to E\otimes\mathcal{H}$ is said to be  {\it generalized inverse} of $\widetilde{V}$ if $\widetilde{V}{S}\widetilde{V}=\widetilde{V}$ and ${S}\widetilde{V}{S}={S}$.

\begin{lemma}\label{www06}
	Let $(\sigma, V)$ be a regular c.b.c. representation  of $E$ on  $\mathcal{H},$ and let $S$ be a generalized inverse  of $\widetilde{V}$. Then $(I_{E^{\otimes n}}\otimes S){N}(\widetilde{V}_n) \subseteq {N}(\widetilde{V}_{n+1})$ for all $n\in \mathbb{N}.$
\end{lemma}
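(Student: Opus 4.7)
The plan is to first rewrite the composition on the left-hand side using the defining relation of a generalized inverse, then isolate a containment that encodes the full role of regularity. Using $\widetilde{V}_{m+1}=\widetilde{V}_m(I_{E^{\otimes m}}\otimes\widetilde{V})$ together with $\widetilde{V}S\widetilde{V}=\widetilde{V}$, I would first compute
\[
\widetilde{V}_{m+1}(I_{E^{\otimes m}}\otimes S)=\widetilde{V}_m(I_{E^{\otimes m}}\otimes\widetilde{V})(I_{E^{\otimes m}}\otimes S)=\widetilde{V}_m(I_{E^{\otimes m}}\otimes\widetilde{V}S).
\]
The operator $\widetilde{V}S$ is idempotent on $\mathcal{H}$ and fixes every vector of $R(\widetilde{V})$ (since $\widetilde{V}S(\widetilde{V}z)=\widetilde{V}z$); hence $I_{E^{\otimes m}}\otimes\widetilde{V}S$ acts as the identity on $E^{\otimes m}\otimes R(\widetilde{V})$.

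Consequently, the inclusion $(I_{E^{\otimes m}}\otimes S)N(\widetilde{V}_m)\subseteq N(\widetilde{V}_{m+1})$ reduces to proving the containment $N(\widetilde{V}_m)\subseteq E^{\otimes m}\otimes R(\widetilde{V})$. Once this is established, $(I_{E^{\otimes m}}\otimes\widetilde{V}S)x=x$ for every $x\in N(\widetilde{V}_m)$, and therefore $\widetilde{V}_{m+1}(I_{E^{\otimes m}}\otimes S)x=\widetilde{V}_m x=0$, which is exactly the desired conclusion.

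I would prove the containment by induction on $m$. The case $m=1$ is immediate from regularity: $N(\widetilde{V})\subseteq E\otimes R^{\infty}(\widetilde{V})\subseteq E\otimes R(\widetilde{V})$. For $m\geq 2$, fix $x\in N(\widetilde{V}_m)$ and use $\widetilde{V}_m=\widetilde{V}(I_E\otimes\widetilde{V}_{m-1})$ to see that $(I_E\otimes\widetilde{V}_{m-1})x\in N(\widetilde{V})\subseteq E\otimes R^{\infty}(\widetilde{V})\subseteq E\otimes R(\widetilde{V}_m)$. Since each element of $R(\widetilde{V}_m)$ has the form $\widetilde{V}_{m-1}t$ with $t\in E^{\otimes(m-1)}\otimes R(\widetilde{V})$ (via the factorization $\widetilde{V}_m=\widetilde{V}_{m-1}(I_{E^{\otimes(m-1)}}\otimes\widetilde{V})$), one constructs $y\in E^{\otimes m}\otimes R(\widetilde{V})$ with $(I_E\otimes\widetilde{V}_{m-1})y=(I_E\otimes\widetilde{V}_{m-1})x$. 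The difference $x-y$ lies in $N(I_E\otimes\widetilde{V}_{m-1})$, which under the standard compatibility of the interior tensor product with kernels equals $E\otimes N(\widetilde{V}_{m-1})$; the inductive hypothesis places $N(\widetilde{V}_{m-1})\subseteq E^{\otimes(m-1)}\otimes R(\widetilde{V})$, so both $y$ and $x-y$ lie in $E^{\otimes m}\otimes R(\widetilde{V})$, and hence so does $x$.

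The main obstacle is the lifting argument in the inductive step: converting the inclusion $R^{\infty}(\widetilde{V})\subseteq R(\widetilde{V}_m)$ into an explicit factorization that places the preimage $y$ inside the closed subspace $E^{\otimes m}\otimes R(\widetilde{V})$, and justifying the tensor-product identity $N(I_E\otimes\widetilde{V}_{m-1})=E\otimes N(\widetilde{V}_{m-1})$. Both rely on the closed-range hypothesis built into regularity (propagating from $\widetilde{V}$ to $\widetilde{V}_{m-1}$) and on the standard behavior of adjointable operators under interior tensor products of Hilbert $C^*$-modules; this is where the regularity hypothesis does its essential work beyond the base case.
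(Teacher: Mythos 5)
Your argument is correct and is essentially the paper's proof: both reduce the lemma to the containment $N(\widetilde{V}_m)\subseteq E^{\otimes m}\otimes R(\widetilde{V})$, lift $x=(I_{E^{\otimes m}}\otimes \widetilde{V})\eta$, and use $\widetilde{V}S\widetilde{V}=\widetilde{V}$ to conclude $\widetilde{V}_{m+1}(I_{E^{\otimes m}}\otimes S)x=\widetilde{V}_m x=0$. The only difference is that the paper asserts that containment outright as a consequence of regularity (the covariant analogue of the classical fact $N(T^m)\subseteq R(T)$ for regular operators), whereas you supply the inductive proof; your induction is the right one, and the closed-range and lifting technicalities you flag are precisely where the regularity hypothesis (and the propagation of closed range to $\widetilde{V}_n$) is consumed.
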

\begin{proof}
Since $(\sigma, V)$ is a regular, let $\eta\in$ ${N}(\widetilde{V}_n) \subseteq {E^{\otimes n}} \otimes {R}(\widetilde{V}),$ then there exists $\zeta\in E^{\otimes n+1}\otimes\mathcal{H}$ such that  $\eta=(I_{E^{\otimes n}}\otimes \widetilde{V})\zeta.$ It follows that
	\begin{align*}
		\widetilde{V}_{n+1}(I_{E^{\otimes n}}\otimes S)\eta &=\widetilde{V}_{n}(I_{E^{\otimes n}}\otimes \widetilde{V}S\widetilde{V})\zeta=\widetilde{V}_{n}(I_{E^{\otimes n}}\otimes \widetilde{V})\zeta=\widetilde{V}_n (\eta)=0.
	\end{align*}
	Therefore $(I_{E^{\otimes n}}\otimes S){N}(\widetilde{V}_n) \subseteq {N}(\widetilde{V}_{n+1}).$
\end{proof}
Now we present the connection between partial isometric and power partial isometric representations.
\begin{corollary}
	Let $(\sigma, V)$ be a regular c.b.c. representation of $E$ on $\mathcal{H}$. Then $(\sigma, V)$ is a partial isometric representation if and only if $(\sigma, V)$ is a power partial isometric representation.
\end{corollary}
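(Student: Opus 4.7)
The forward implication is immediate: in a power partial isometric representation $\widetilde{V}_n$ is a partial isometry for every $n\ge 1$, and in particular $\widetilde{V}=\widetilde{V}_1$ is a partial isometry, so $(\sigma,V)$ is partial isometric. The substantive direction is the converse. Assuming $(\sigma,V)$ is regular and partial isometric, my plan is to invoke the ``moreover'' part of Theorem \ref{www05}: it suffices to establish
\[
(I_{E^{\otimes i}}\otimes\widetilde{V})N(\widetilde{V}_{i+1})^{\perp}\subseteq N(\widetilde{V}_i)^{\perp} \quad\text{for every }i\in\mathbb{N}.
\]

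The strategy is to translate this inclusion into its ``$N$-side'' form via Proposition \ref{www04}, and then prove that form using Lemma \ref{www06} with the choice $S=\widetilde{V}^*$. Since $\widetilde{V}$ is a partial isometry we have $\widetilde{V}\widetilde{V}^*\widetilde{V}=\widetilde{V}$ and, taking adjoints, $\widetilde{V}^*\widetilde{V}\widetilde{V}^*=\widetilde{V}^*$, so $\widetilde{V}^*$ is a generalized inverse of $\widetilde{V}$; regularity then lets Lemma \ref{www06} fire, yielding
\[
(I_{E^{\otimes i}}\otimes\widetilde{V}^*)N(\widetilde{V}_i)\subseteq N(\widetilde{V}_{i+1}).
\]
Factoring $\widetilde{V}_{i+1}=\widetilde{V}_i(I_{E^{\otimes i}}\otimes\widetilde{V})$, this says $\widetilde{V}_i(I_{E^{\otimes i}}\otimes\widetilde{V}\widetilde{V}^*)\eta=0$ for every $\eta\in N(\widetilde{V}_i)$, i.e., the self-adjoint projection $P:=I_{E^{\otimes i}}\otimes\widetilde{V}\widetilde{V}^*$ leaves $N(\widetilde{V}_i)$ invariant.

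Since $P$ is a self-adjoint projection, invariance of $N(\widetilde{V}_i)$ under $P$ is equivalent to invariance of $N(\widetilde{V}_i)^{\perp}$ under $P$. Combined with the identity $PN(\widetilde{V}_i)^{\perp}=(I_{E^{\otimes i}}\otimes\widetilde{V})N(\widetilde{V}_{i+1})^{\perp}$ extracted from the computation inside the proof of Proposition \ref{www04}, this gives exactly the required inclusion, and the ``moreover'' part of Theorem \ref{www05} then delivers the power partial isometric conclusion. The main conceptual step---and the only place where the argument might have stalled---is recognizing that $\widetilde{V}^*$ itself plays the role of the generalized inverse needed by Lemma \ref{www06}, which is precisely what the partial isometry hypothesis supplies for free; everything else is bookkeeping between the orthogonal-complement and direct forms of the invariance condition.
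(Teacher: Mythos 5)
Your proof is correct and takes essentially the same route as the paper: both arguments observe that the partial isometry identity $\widetilde{V}\widetilde{V}^*\widetilde{V}=\widetilde{V}$ makes $\widetilde{V}^*$ a generalized inverse, feed this into Lemma \ref{www06} to get $(I_{E^{\otimes m}}\otimes\widetilde{V}^*)N(\widetilde{V}_m)\subseteq N(\widetilde{V}_{m+1})$, pass to the orthogonal-complement form of the inclusion via Proposition \ref{www04}, and conclude with the ``moreover'' clause of Theorem \ref{www05}. The only cosmetic difference is that you re-derive the equivalence of Proposition \ref{www04} by hand through the self-adjointness of the projection $I_{E^{\otimes i}}\otimes\widetilde{V}\widetilde{V}^*$, and you also record the (trivial) forward implication, which the paper omits.
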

\begin{proof}
	Since $(\sigma, V)$ is a partial isometric representation, that is, $\widetilde{V}\widetilde{V}^*\widetilde{V}=\widetilde{V}$ and $\widetilde{V}^*\widetilde{V}\widetilde{V}^*=\widetilde{V}^*.$ Clearly $\widetilde{V}^*$ is a generalized inverse of $\widetilde{V}$. From Lemma \ref{www06}, $(I_{E^{\otimes n}}\otimes \widetilde{V}^*){N}(\widetilde{V}_n) \subseteq {N}(\widetilde{V}_{n+1})$ for all $n\in \mathbb{N},$ or
	equivalently, $(I_{E^{\otimes n}}\otimes\widetilde{V}){N}(\widetilde{V}_{n+1})^{\perp} \subseteq {N}(\widetilde{V}_{n})^{\perp}$ for all $n\in \mathbb{N}.$ Then from Theorem \ref{www05}, $(\sigma, V)$ is a power partial isometric representation.
\end{proof}

\begin{example}\label{www07}
	Suppose that $E$ is an $n$-dimensional Hilbert space with the orthonormal basis $\{\delta_i\}_{i\in I_{n}}$ and $\mathcal{H}$ is a Hilbert space with the orthonormal basis $\{e_m :\: m\ge 0\}.$ Let $(\rho, S^{w})$ be the unilateral weighted shift c.b.c. representation (see \cite{AHS22,Saini1}) of $E$ on $\mathcal{H}$ defined by
	$$S^{w}(\delta_i)=V_i \:\: \mbox{and}\:\:\: \rho(b)=b I_{\mathcal{H}}, \:\:b \in \mathbb{C},$$ where $V_{i}(e_{m})=w_{i,m}e_{nm+i}$ for all $m\ge 0,i\in I_n$ and $\{w_{i,m} : i \in I_n,\:\:m\ge 0 \}$ is a bounded set of nonnegative real numbers.
	Let $B\subseteq\mathbb{N}$ and $(\alpha_m)_{m\in \mathbb{N}}$ be the sequence such that
	$$\begin{cases}
		\alpha_m=0 & \text{if }  {m\in B}; \\
		\alpha_m=1 & \text{if }   {m\notin B}.
	\end{cases}$$
	The unilateral weighted shift c.b.c. is defined by $$S^{w,{B}}(\delta_i)(e_{m})=V_{i,B}(e_{m})=w_{i,m}\alpha_me_{nm+i} \quad \mbox{for all}\quad m\ge 0.$$ It is easy to verify that $N(V_{i,B})=\overline{span} \{e_m : \: m\in B\}$ and $N(V_{i,B}^k)= \bigcup_{p=1}^{k}\overline{span} \{e_{m} : \: n^{p-1}m +\sum_{l=2}^{p} n^{p-l}i \in B\}$ for every $i\in I_n$ and $k\in \mathbb{N}.$ This implies that $$V_{i,B}(N(V_{i,B}^{k+1})^{\perp})\subseteq N(V_{i,B}^{k})^{\perp}\quad \mbox{for all}\quad i\in I_n.$$
\end{example}
The following result is a combination of Example \ref{www07} and Theorem \ref{www05}:
\begin{proposition}
	\begin{enumerate}
		\item $(\rho, S^{w,{B}})$ is a partial isometric representation of $E$ on $\mathcal{H}$ if and only if  $w_{i,m}=1$ for every $m\notin B$ and $i\in I_n.$
		\item  Every partial isometric unilateral weighted shift c.b.c. representation $(\rho, S^{w,{B}})$  of $E$ on $\mathcal{H}$ is a power partial isometric representation.
	\end{enumerate}
\end{proposition}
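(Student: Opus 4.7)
My plan is to reduce both assertions to direct calculations on the orthonormal basis $\{\delta_i \otimes e_m\}_{i \in I_n,\, m \geq 0}$ of $E \otimes \mathcal{H}$, exploiting the fact from Example \ref{www07} that the weighted shift sends distinct basis tensors to scalar multiples of distinct basis vectors of $\mathcal{H}$ (or to zero), and then to apply Theorem \ref{www05} for the power partial isometry conclusion.

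For part (1), I first compute $\widetilde{S^{w,B}}(\delta_i \otimes e_m) = w_{i,m}\alpha_m e_{nm+i}$, and note that the map $(i,m) \mapsto nm+i$ is injective on $I_n \times \mathbb{N}_0$, so the nonzero images of these basis tensors are pairwise orthogonal. Hence the orthonormal vectors $\{\delta_i \otimes e_m : m \notin B,\, w_{i,m} \neq 0\}$ span $N(\widetilde{S^{w,B}})^{\perp}$, and partial isometricity on this subspace reduces to the pointwise requirement $\|w_{i,m}\, e_{nm+i}\| = 1$ for each surviving pair, i.e.\ $w_{i,m} = 1$ whenever $m \notin B$ and $i \in I_n$. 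The converse is the same verification read in reverse.

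For part (2), given part (1), I verify the criterion
$$(I_{E^{\otimes k}} \otimes \widetilde{V}) N(\widetilde{V}_{k+1})^{\perp} \subseteq N(\widetilde{V}_k)^{\perp}$$
of Theorem \ref{www05} with $\widetilde{V} = \widetilde{S^{w,B}}$. Iterating the definition on a basis tensor yields
$$\widetilde{V}_k(\delta_{i_1} \otimes \cdots \otimes \delta_{i_k} \otimes e_m) = \Big(\prod_{p=0}^{k-1}\alpha_{N_p}\Big) e_{N_k},$$
where $N_0 = m$ and $N_{p+1} = n N_p + i_{k-p}$; this is the tensor analogue of the kernel formula from Example \ref{www07}. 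The same injectivity argument as before (now applied to $(i_1,\ldots,i_k,m) \mapsto N_k$) shows the nonzero images are mutually orthogonal, so $N(\widetilde{V}_k)^{\perp}$ has orthonormal basis $\{\delta_{i_1} \otimes \cdots \otimes \delta_{i_k} \otimes e_m : N_0, \ldots, N_{k-1} \notin B\}$. Applying $I_{E^{\otimes k}} \otimes \widetilde{V}$ to a basis tensor of $N(\widetilde{V}_{k+1})^{\perp}$ merely shifts the recursion, turning the original membership conditions $N_0, \ldots, N_k \notin B$ into the weaker image conditions $N_1, \ldots, N_k \notin B$; this is automatic, yielding the desired inclusion.

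The only subtle bookkeeping step is tracking the recursion $N_{p+1} = n N_p + i_{k-p}$ consistently as the tensor length drops from $k+1$ to $k$; once this is in hand, both assertions fall out of the orthogonality of the canonical basis and Theorem \ref{www05}.
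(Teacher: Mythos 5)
Your proof is correct and takes essentially the route the paper intends: the paper offers no written argument beyond declaring the proposition ``a combination of Example \ref{www07} and Theorem \ref{www05},'' and your basis computation --- in particular the multi-index recursion giving $\widetilde{V}_k(\delta_{i_1}\otimes\cdots\otimes\delta_{i_k}\otimes e_m)$ as a scalar multiple of a single basis vector $e_{N_k}$, with the injectivity of $(i_1,\dots,i_k,m)\mapsto N_k$ supplying the needed orthogonality --- is exactly the detail that combination requires (and is in fact slightly more careful than the Example, which only records the kernels of the single-index powers $V_{i,B}^k$). The one implicit assumption, which you share with the paper's Example, is that the weights $w_{i,m}$ are strictly positive, so that $N(\widetilde{S^{w,B}})^{\perp}$ is spanned precisely by the $\delta_i\otimes e_m$ with $m\notin B$.
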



The following theorem is the main result of this section which is a generalization of \cite[Theorem 5]{EMSZ18}.
\begin{theorem}
Let $(\sigma,V)$ be a completely contractive covariant representation of a full $C^*$-correspondence $E$ on $\mathcal{H}.$ Suppose that $\widetilde{V}_k$ is a partial isometry for some $k \ge 2.$ Then the following conditions are equivalent:
\begin{enumerate}
\item $(\sigma,V)$ is a partial isometric representation.
\item $(a)$ For all $\eta\in {N}(\widetilde{V}_k) \ominus {N}(I_{E^{\otimes{k-1}}}\otimes \widetilde{V})$ we have $$\|(I_{E^{\otimes{k-1}}}\otimes \widetilde{V})\eta\|=\|\eta\|.$$
$(b)$ $(I_{E^{\otimes{k-1}}}\otimes \widetilde{V})({N}(\widetilde{V}_k)^{\perp}) \perp (I_{E^{\otimes{k-1}}}\otimes \widetilde{V})({N}(\widetilde{V}_k)\ominus {N}(I_{E^{\otimes{k-1}}}\otimes \widetilde{V})).$
\end{enumerate}
\end{theorem}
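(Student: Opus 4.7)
Write $W:=I_{E^{\otimes(k-1)}}\otimes\widetilde{V}$, so that $\widetilde{V}_k=\widetilde{V}_{k-1}W$ and $N(W)\subseteq N(\widetilde{V}_k)$. This inclusion gives the orthogonal decomposition
\[
N(W)^{\perp}=\bigl(N(\widetilde{V}_k)\ominus N(W)\bigr)\oplus N(\widetilde{V}_k)^{\perp},
\]
which is the backbone of the argument.

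For $(1)\Rightarrow(2)$, I would observe that the partial isometry property of $\widetilde{V}$ passes to $W$. Condition $(a)$ is then immediate since $N(\widetilde{V}_k)\ominus N(W)\subseteq N(W)^{\perp}$, where $W$ preserves norms. For $(b)$, both subspaces appearing there lie in $N(W)^{\perp}$ and are mutually orthogonal in the domain, and $W$ preserves inner products on $N(W)^{\perp}$, so their images remain orthogonal.

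For $(2)\Rightarrow(1)$, the plan is first to show that $W$ is a partial isometry by proving $\|W\eta\|=\|\eta\|$ on each summand of the decomposition and gluing. On $N(\widetilde{V}_k)^{\perp}$, the chain
\[
\|\eta\|=\|\widetilde{V}_k\eta\|=\|\widetilde{V}_{k-1}W\eta\|\le\|W\eta\|\le\|\eta\|,
\]
forced by the partial isometry property of $\widetilde{V}_k$ together with the contractivity of $\widetilde{V}_{k-1}$ and $W$, yields $\|W\eta\|=\|\eta\|$. On $N(\widetilde{V}_k)\ominus N(W)$ this is exactly hypothesis $(a)$. For a general $\eta=\eta_1+\eta_2\in N(W)^{\perp}$ decomposed along the splitting above, hypothesis $(b)$ gives $W\eta_1\perp W\eta_2$, and Pythagoras closes the estimate $\|W\eta\|^{2}=\|\eta_1\|^{2}+\|\eta_2\|^{2}=\|\eta\|^{2}$.

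The remaining step is to pass from ``$W$ is a partial isometry'' to ``$\widetilde{V}$ is a partial isometry,'' and this is where I expect the main obstacle. The identity $WW^{*}W=W$ unfolds to $I_{E^{\otimes(k-1)}}\otimes(\widetilde{V}\widetilde{V}^{*}\widetilde{V}-\widetilde{V})=0$; evaluating on a simple tensor $\xi\otimes y$ and using the $\mathcal{A}$-valued inner product gives $\sigma(\langle\xi,\xi\rangle)w=0$, where $w:=(\widetilde{V}\widetilde{V}^{*}\widetilde{V}-\widetilde{V})y$, for every $\xi\in E^{\otimes(k-1)}$ and every $y\in E\otimes\mathcal{H}$. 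To promote this to $\sigma(a)w=0$ for all $a\in\mathcal{A}$ I would use polarization of the inner product together with fullness of $E$ (which propagates to fullness of $E^{\otimes(k-1)}$). Nondegeneracy of $\sigma$ then forces $w=0$, i.e.\ $\widetilde{V}\widetilde{V}^{*}\widetilde{V}=\widetilde{V}$, so $(\sigma,V)$ is partial isometric.
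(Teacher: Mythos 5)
Your proposal is correct and follows essentially the same route as the paper: the same decomposition $N(I_{E^{\otimes(k-1)}}\otimes\widetilde{V})^{\perp}=N(\widetilde{V}_k)^{\perp}\oplus\bigl(N(\widetilde{V}_k)\ominus N(I_{E^{\otimes(k-1)}}\otimes\widetilde{V})\bigr)$, the same norm chain forced by contractivity, and the same Pythagoras gluing. The only difference is that you spell out the final passage from ``$I_{E^{\otimes(k-1)}}\otimes\widetilde{V}$ is a partial isometry'' to ``$\widetilde{V}$ is a partial isometry'' via the inner-product/polarization argument, a step the paper dispatches in one sentence by citing fullness of $E$ and nondegeneracy of $\sigma$; your elaboration is a valid filling-in of that gap.
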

\begin{proof}
	$(1)\Rightarrow (2).$ Suppose that $ \widetilde{V}$ is a partial isometry, then $(I_{E^{\otimes{k-1}}}\otimes \widetilde{V})$ is also a partial isometry. Let $\eta\in {N}(\widetilde{V}_k) \ominus {N}(I_{E^{\otimes{k-1}}}\otimes \widetilde{V}) \subseteq {N}(I_{E^{\otimes{k-1}}}\otimes \widetilde{V})^{\perp},$ then $\|(I_{E^{\otimes{k-1}}}\otimes \widetilde{V})\eta\|=\|\eta\|.$ Next we want to prove that $(I_{E^{\otimes{k-1}}}\otimes \widetilde{V})({N}(\widetilde{V}_k)^{\perp}) \perp (I_{E^{\otimes{k-1}}}\otimes \widetilde{V})({N}(\widetilde{V}_k)\ominus {N}(I_{E^{\otimes{k-1}}}\otimes \widetilde{V})).$ Let $\eta\in$ ${N}(\widetilde{V}_k)^{\perp} \subseteq {N}(I_{E^{\otimes{k-1}}}\otimes \widetilde{V})^{\perp}$ and $\zeta\in{N}(\widetilde{V}_k)\ominus {N}(I_{E^{\otimes{k-1}}}\otimes \widetilde{V}) \subseteq {N}(I_{E^{\otimes{k-1}}}\otimes \widetilde{V})^{\perp},$ then $\|(I_{E^{\otimes{k-1}}}\otimes \widetilde{V})\eta\|=\|\eta\|$ and $\|(I_{E^{\otimes{k-1}}}\otimes \widetilde{V})\zeta\|=\|\zeta\|.$ Since $(I_{E^{\otimes{k-1}}}\otimes \widetilde{V})$ is a partial isometry, we have
	\begin{align*}
		\|(I_{E^{\otimes{k-1}}}\otimes \widetilde{V})\eta+(I_{E^{\otimes{k-1}}}\otimes \widetilde{V})\zeta\|^2 &=\|(I_{E^{\otimes{k-1}}}\otimes \widetilde{V})(\eta+\zeta)\|^2=\|\eta+\zeta\|^2 =\|\eta\|^2+ \|\zeta\|^2 \\& =\|(I_{E^{\otimes{k-1}}}\otimes \widetilde{V})\eta\|^2+\|(I_{E^{\otimes{k-1}}}\otimes \widetilde{V})\zeta\|^2.
	\end{align*}
	It follows that $(I_{E^{\otimes{k-1}}}\otimes \widetilde{V})({N}(\widetilde{V}_k)^{\perp}) \perp (I_{E^{\otimes{k-1}}}\otimes \widetilde{V})({N}(\widetilde{V}_k)\ominus {N}(I_{E^{\otimes{k-1}}}\otimes \widetilde{V})).$

	$(2)\Rightarrow (1).$ Suppose $(2)$ holds. Since $\widetilde{V}_k$ is a partial isometry and $\widetilde{V}$ is a contraction. For $\eta\in {N}(\widetilde{V}_k)^{\perp}$, we have $$\|\eta\|=\|\widetilde{V}_k \eta\|=\|\widetilde{V}_{k-1}(I_{E^{\otimes{k-1}}}\otimes \widetilde{V})\eta \| \le \|(I_{E^{\otimes{k-1}}}\otimes \widetilde{V})\eta \| \le \|\eta\|.$$ This implies that $\|(I_{E^{\otimes{k-1}}}\otimes \widetilde{V})\eta \|=\|\eta\|$ for all  $\eta\in{N}(\widetilde{V}_k)^{\perp}.$ Since ${N}(I_{E^{\otimes{k-1}}}\otimes \widetilde{V})^{\perp}={N}(\widetilde{V}_k)^{\perp} \oplus ({N}(\widetilde{V}_k) \ominus {N}(I_{E^{\otimes{k-1}}}\otimes \widetilde{V})).$ For each $\eta \in {N}(I_{E^{\otimes{k-1}}}\otimes \widetilde{V})^{\perp},$ we have $\eta=\eta_1+\eta_2 \in {N}(\widetilde{V}_k)^{\perp} \oplus ({N}(\widetilde{V}_k) \ominus {N}(I_{E^{\otimes{k-1}}}\otimes \widetilde{V})).$
	From $(2)$ we get
	\begin{align*}
		&\|(I_{E^{\otimes{k-1}}}\otimes \widetilde{V})\eta\|^2 =\|(I_{E^{\otimes{k-1}}}\otimes \widetilde{V})\eta_1+(I_{E^{\otimes{k-1}}}\otimes \widetilde{V})\eta_2\|^2\\&=\|(I_{E^{\otimes{k-1}}}\otimes \widetilde{V})\eta_1\|^2+\|(I_{E^{\otimes{k-1}}}\otimes \widetilde{V})\eta_2\|^2 =\|\eta_1\|^2+\|\eta_2\|^2=\|\eta\|^2.
	\end{align*}
	Therefore $(I_{E^{\otimes{k-1}}}\otimes \widetilde{V})$ is a partial isometry. Since $E$ is full and $\sigma$ is nondegenerate, $(\sigma,V)$ is a partial isometric representation.
\end{proof}

The following result is a generalization of \cite[Theorem A]{BCG80}.
\begin{remark}
	Let $(\sigma,V)$ be a completely contractive covariant representation of a full $C^*$-correspondence $E$ on $\mathcal{H}.$ Suppose that $\widetilde{V}_k$ is a partial isometry for some $k \ge 2.$ If ${N}(I_{E} \otimes \widetilde{V})= {N}(\widetilde{V}_2),$ then $(\sigma,V)$ is a partial isometric representation.
\end{remark}

\begin{remark}
		Let $(\sigma,V)$ be a completely contractive covariant representation of a full $C^*$-correspondence $E$ on $\mathcal{H}.$ Suppose that $\widetilde{V}_k$ is a partial isometry for some $k \ge 2$ such that \begin{itemize}
 	\item[(1)] $\|(I_{E^{\otimes{k-1}}}\otimes \widetilde{V})\eta\|=\|\eta\|$ for all $\eta\in{N}(\widetilde{V}_k) \ominus {N}(I_{E^{\otimes{k-1}}}\otimes \widetilde{V}).$
 	\item[(2)] $(I_{E^{\otimes(k-1)}}\otimes\widetilde{V}){N}(\widetilde{V}_{k})^{\perp} \subseteq {N}(\widetilde{V}_{k-1})^{\perp}.$
 \end{itemize}
	Then $(\sigma,V)$ is a partial isometric representation. Indeed, let $\eta\in {N}(\widetilde{V}_k)\ominus {N}(I_{E^{\otimes{k-1}}}\otimes \widetilde{V}),$ then $(I_{E^{\otimes(k-1)}}\otimes\widetilde{V})\eta \in {N}(\widetilde{V}_{k-1}).$ Since $(I_{E^{\otimes(k-1)}}\otimes\widetilde{V}){N}(\widetilde{V}_{k})^{\perp} \subseteq {N}(\widetilde{V}_{k-1})^{\perp},$ we obtain  $(I_{E^{\otimes{k-1}}}\otimes \widetilde{V})({N}(\widetilde{V}_k)^{\perp}) \perp (I_{E^{\otimes{k-1}}}\otimes \widetilde{V})({N}(\widetilde{V}_k)\ominus {N}(I_{E^{\otimes{k-1}}}\otimes \widetilde{V})).$
\end{remark}

\subsection{Wold-type decomposition for regular partial isometric representations}

Ezzahraoui, Mbekhta, and Zerouali \cite{EMZ15} discussed a Wold-type decomposition for regular operators using growth condition. Rohilla, Veerabathiran, and Trivedi \cite{AHS22} generalized this decomposition for regular covariant representations with growth condition. In \cite{EMZ21}, Ezzahraoui, Mbekhta, and Zerouali proved a Wold-type decomposition for regular operators with regular Moore-Penrose inverse. Saini \cite{Saini1} generalized this decomposition for {regular} covariant representations whose Moore-Penrose inverse is regular.

Now we recall the following notion of bi-regular covariant representation and it's Wold-type decomposition from \cite{Saini1}.

\begin{definition}
	Let $(\sigma,V)$ be a regular c.b.c. representation of ${E}$ on $\mathcal H.$ We say that $(\sigma,V)$ is {\rm bi-regular} if $N(I_{E^{\otimes n}}\otimes \widetilde{V}^{\dagger})\subseteq R(\widetilde{V}^{\dagger {(n)}})$ for all $n\in \mathbb{N},$ that is, $\widetilde{V}^{\dagger}$ is regular.
\end{definition}
Note that if $(\sigma,V)$ is a regular c.b.c. representation of ${E}$ on $\mathcal H.$ Then $\widetilde{V}^*$ is also regular, that is, $N(I_{E^{\otimes n}}\otimes \widetilde{V}^{*})\subseteq R(\widetilde{V}_n^{*})$ for all $n\in \mathbb{N}.$ Let $(\sigma,V)$ be a regular partial isometric representation of ${E}$ on $\mathcal H,$ then $\widetilde{V}^*=\widetilde{V}^{\dagger},$ and hence $(\sigma,V)$ is bi-regular.
\begin{definition}
 Let $(\sigma, V)$ be a c.b.c. representation of $E$ on $\mathcal{H}$ with closed range. The Cauchy dual of $(\sigma, V)$ \cite{Saini1} is defined by $\widetilde{V}': E \otimes \mathcal{H} \to \mathcal{H}$ by  $$\widetilde{V}':=\widetilde{V}(\widetilde{V}^*\widetilde{V})^\dagger.$$ 	
\end{definition}
Suppose that $(\sigma,V)$ is a partial isometric representation of $E$ on $\mathcal{H}.$ Then by \cite[Proposition 3.3]{Saini1}, $\widetilde{V}'=\widetilde{V}.$

\begin{proposition}
	Let $(\sigma,V)$ be a bi-regular representation of $E$ on $\mathcal H.$ Then $$\mathcal{H} = [\mathcal{H}\ominus \widetilde{V}(E\otimes \mathcal{H})]_{\widetilde{V}} \oplus R^{\infty}({\widetilde{V}'})=[\mathcal{H}\ominus \widetilde{V}(E\otimes \mathcal{H})]_{\widetilde{V}'} \oplus R^{\infty}({\widetilde{V}}).$$
\end{proposition}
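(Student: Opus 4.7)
The plan is to prove a finite-step orthogonal decomposition and then pass to a limit, following the Shimorin-style Wold-decomposition scheme adapted by Rohilla--Veerabathiran--Trivedi and Saini to covariant representations. Write $W := \mathcal{H}\ominus \widetilde{V}(E\otimes\mathcal{H})$; since $R(\widetilde{V})$ is closed, $W = N(\widetilde{V}^*) = N(\widetilde{V}^\dagger)$. Using the Moore--Penrose identities one checks that $(\widetilde{V}')^* = \widetilde{V}^\dagger$ and, iterating, $(\widetilde{V}'_n)^* = \widetilde{V}^{\dagger(n)}$; in particular $R(\widetilde{V}'_n)^\perp = N(\widetilde{V}^{\dagger(n)})$. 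Since $\widetilde{V}'_{n+1} = \widetilde{V}'_n(I_{E^{\otimes n}}\otimes \widetilde{V}')$, the ranges $R(\widetilde{V}'_n)$ form a decreasing chain whose intersection is $R^\infty(\widetilde{V}')$.

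The technical heart is to prove by induction on $n$ the finite-step orthogonal decomposition
\begin{equation*}
\mathcal{H} \;=\; \bigoplus_{k=0}^{n-1}\widetilde{V}_k\bigl(E^{\otimes k}\otimes W\bigr)\;\oplus\; R(\widetilde{V}'_n), \qquad n\geq 1.
\end{equation*}
The case $n=1$ is the identity $\mathcal{H} = W\oplus R(\widetilde{V})$, together with $R(\widetilde{V}') = R(\widetilde{V})$ (which follows from $\widetilde{V}' = \widetilde{V}(\widetilde{V}^*\widetilde{V})^\dagger$ and closed range). For the inductive step I would establish the single-level splitting
\begin{equation*}
R(\widetilde{V}'_n) \;=\; \widetilde{V}_n\bigl(E^{\otimes n}\otimes W\bigr)\;\oplus\; R(\widetilde{V}'_{n+1}),
\end{equation*}
with the orthogonality coming from $\widetilde{V}^\dagger|_W = 0$ and mutual orthogonality among the $\widetilde{V}_k(E^{\otimes k}\otimes W)$ derived from $R(\widetilde{V}'_k)^\perp = N(\widetilde{V}^{\dagger(k)})$. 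Here the bi-regularity hypothesis $N(I_{E^{\otimes n}}\otimes \widetilde{V}^\dagger)\subseteq R(\widetilde{V}^{\dagger(n)})$ becomes indispensable: it is what forces any vector in $R(\widetilde{V}'_n)$ not belonging to $R(\widetilde{V}'_{n+1})$ to be entirely accounted for by the wandering contribution $\widetilde{V}_n(E^{\otimes n}\otimes W)$, with no residual kernel escaping.

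Passing to $n\to\infty$ in the finite-step decomposition, the wandering sum converges monotonically to $[W]_{\widetilde{V}}$ and the decreasing intersection of $R(\widetilde{V}'_n)$ is $R^\infty(\widetilde{V}')$, which gives the first equality. The second equality follows by the symmetric argument applied to the Cauchy dual: one verifies $(\widetilde{V}')' = \widetilde{V}$ using $(\widetilde{V}')^*\widetilde{V}' = (\widetilde{V}^*\widetilde{V})^\dagger$, checks that bi-regularity is preserved under the Cauchy-dual operation, and notes that $\mathcal{H}\ominus \widetilde{V}'(E\otimes\mathcal{H}) = \mathcal{H}\ominus \widetilde{V}(E\otimes\mathcal{H}) = W$, after which the entire argument reruns with $\widetilde{V}$ and $\widetilde{V}'$ swapped. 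The main obstacle will be the inductive step, namely verifying both orthogonality and exact spanning in $R(\widetilde{V}'_n) = \widetilde{V}_n(E^{\otimes n}\otimes W)\oplus R(\widetilde{V}'_{n+1})$ without a residual summand; this is precisely the point at which the bi-regularity condition does its essential work, ruling out the pathologies that would otherwise obstruct a clean Wold-type decomposition.
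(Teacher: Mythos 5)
First, a point of reference: the paper itself does not prove this proposition; it is recalled verbatim from \cite{Saini1} (``Now we recall \dots it's Wold-type decomposition from \cite{Saini1}''), so there is no internal argument to compare yours against. Your architecture --- the finite-step orthogonal decomposition $\mathcal{H}=\bigoplus_{k=0}^{n-1}\widetilde{V}_k(E^{\otimes k}\otimes W)\oplus R(\widetilde{V}'_n)$ followed by a passage to the limit, together with the duality facts $(\widetilde{V}')^{*}=\widetilde{V}^{\dagger}$, $R(\widetilde{V}')=R(\widetilde{V})$, $(\widetilde{V}'_n)^{*}=\widetilde{V}^{\dagger(n)}$ and $(\widetilde{V}')'=\widetilde{V}$ --- is the correct Shimorin/Ezzahraoui--Mbekhta--Zerouali scheme that \cite{Saini1} adapts to covariant representations, and the identities you do state are all verifiable.

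The gap is that the step you yourself label ``the technical heart'' and ``the main obstacle'' is never carried out: the single-level splitting $R(\widetilde{V}'_n)=\widetilde{V}_n(E^{\otimes n}\otimes W)\oplus R(\widetilde{V}'_{n+1})$ \emph{is} the theorem, and you assert it rather than derive it. Concretely, three things are missing. (i) Orthogonality: already for $n=1$, showing $\widetilde{V}(E\otimes W)\perp R(\widetilde{V}'_{2})$ reduces, via $\widetilde{V}^{*}\widetilde{V}'=P_{N(\widetilde{V})^{\perp}}$, to the inclusion $E\otimes W\subseteq N(\widetilde{V})^{\perp}$, which is exactly what regularity ($N(\widetilde{V})\subseteq E\otimes R^{\infty}(\widetilde{V})\subseteq E\otimes R(\widetilde{V})=(E\otimes W)^{\perp}$) supplies; you never make this reduction, nor its higher analogues. (ii) Exhaustion: you say bi-regularity ``forces'' every vector of $R(\widetilde{V}'_n)\ominus R(\widetilde{V}'_{n+1})$ into the wandering piece, but give no mechanism. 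The actual engine is the telescoping identity $I=\sum_{k=0}^{n-1}\widetilde{V}_k(I_{E^{\otimes k}}\otimes P_W)(\widetilde{V}'_k)^{*}+\widetilde{V}_n(\widetilde{V}'_n)^{*}$, obtained by iterating $I=P_W+\widetilde{V}(\widetilde{V}')^{*}$, and one must then verify that each summand is an idempotent with the claimed range and that the regularity/bi-regularity hypotheses make these idempotents orthogonal projections --- none of which appears in your sketch. (iii) For the second equality you invoke ``bi-regularity is preserved under the Cauchy-dual operation''; this is a genuinely nontrivial assertion comparing $N(I_{E^{\otimes n}}\otimes(\widetilde{V}')^{\dagger})$ with $R((\widetilde{V}')^{\dagger(n)})$, and it is stated without any justification. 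As written, your proposal is a correct road map, not a proof: the places where the bi-regularity hypothesis ``does its essential work'' are precisely the places left blank.
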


\begin{corollary}
	Let $(\sigma,V)$ be a regular partial isometric representation of ${E}$ on $\mathcal H.$ Then $$\mathcal{H} = [\mathcal{H}\ominus \widetilde{V}(E\otimes \mathcal{H})]_{\widetilde{V}} \oplus R^{\infty}({\widetilde{V}}).$$ 
\end{corollary}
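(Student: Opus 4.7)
The plan is to derive this as a direct corollary of the preceding Wold-type decomposition proposition for bi-regular representations, using two facts already stated in the excerpt.

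First, I would observe that under the hypothesis the representation $(\sigma,V)$ is bi-regular. This is noted in the paragraph immediately preceding the corollary: if $(\sigma,V)$ is regular and partial isometric, then the Moore--Penrose inverse coincides with the adjoint ($\widetilde{V}^{\dagger}=\widetilde{V}^*$), so the regularity condition $N(I_{E^{\otimes n}}\otimes \widetilde{V}^{*})\subseteq R(\widetilde{V}_n^{*})$ for $\widetilde{V}^*$ translates precisely into the bi-regularity condition $N(I_{E^{\otimes n}}\otimes \widetilde{V}^{\dagger})\subseteq R(\widetilde{V}^{\dagger{(n)}})$.

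Next, I would invoke the Proposition just above the corollary, which yields the two equivalent decompositions
\[
\mathcal{H} = [\mathcal{H}\ominus \widetilde{V}(E\otimes \mathcal{H})]_{\widetilde{V}} \oplus R^{\infty}({\widetilde{V}'})=[\mathcal{H}\ominus \widetilde{V}(E\otimes \mathcal{H})]_{\widetilde{V}'} \oplus R^{\infty}({\widetilde{V}}).
\]
To finish, I would use the second fact pointed out in the excerpt: for a partial isometric representation one has $\widetilde{V}'=\widetilde{V}$ (via \cite[Proposition 3.3]{Saini1}, since then $(\widetilde{V}^*\widetilde{V})^\dagger$ is itself the projection $\widetilde{V}^*\widetilde{V}$, and $\widetilde{V}(\widetilde{V}^*\widetilde{V})=\widetilde{V}$ for a partial isometry). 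Substituting $\widetilde{V}'=\widetilde{V}$ into either of the two decompositions immediately yields
\[
\mathcal{H} = [\mathcal{H}\ominus \widetilde{V}(E\otimes \mathcal{H})]_{\widetilde{V}} \oplus R^{\infty}({\widetilde{V}}),
\]
which is the claim.

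There is essentially no obstacle: the corollary is a direct specialization of the bi-regular Wold-type decomposition to the case where the representation coincides with its Cauchy dual. The only minor points worth checking carefully are (i) that the hypotheses of the preceding proposition are genuinely met (i.e.\ that partial isometric $+$ regular implies bi-regular, which the excerpt has already established), and (ii) that the identification $\widetilde{V}'=\widetilde{V}$ is being quoted correctly; once both are in place the proof reduces to substitution.
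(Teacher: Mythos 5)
Your proposal is correct and is exactly the argument the paper intends: the corollary follows by noting that a regular partial isometric representation is bi-regular (since $\widetilde{V}^{\dagger}=\widetilde{V}^*$) and substituting $\widetilde{V}'=\widetilde{V}$ into the preceding proposition's decomposition. Both facts are stated in the paper immediately before the corollary, so your proof matches the paper's (implicit) reasoning.
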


\subsection*{Acknowledgement}
The authors thank the reviewer for carefully reading the manuscript and the editor for suggesting changes in the manuscript. Dimple Saini is supported by UGC fellowship (File No:16-6(DEC. 2018)/2019(NET/CSIR)). Harsh Trivedi is supported by MATRICS-SERB  
Research Grant, File No: MTR/2021/000286, by 
SERB, Department of Science \& Technology (DST), Government of India. Shankar Veerabathiran thanks IMSc Chennai for postdoc fellowship. Saini and Trivedi acknowledge the DST-FIST program (Govt. of India) FIST - No. SR/FST/MS-I/2018/24.


\begin{thebibliography}{10}
	


\bibitem{C77}
J. Cuntz, \emph{Simple $C^*$-algebras generated by isometries}, Comm. Math. Phys.
{57}(2) (1977), 173-185.

\bibitem{E68}
I. Erdelyi, \emph{Partial isometries closed under multiplication on Hilbert spaces}, J. Math. Anal. Appl. 22 (1968), 546-551.

\bibitem{EMSZ18}
H. Ezzahraoui, M. Mbekhta, A. Salhi and E. H. Zerouali, \emph{A note on roots and powers of partial isometries}, Archiv der Mathematik 110(3) (2018), 251-259.

\bibitem{EMZ15}
H. Ezzahraoui, M. Mbekhta and E. H. Zerouali, \emph{Wold-type decomposition for some regular operators}, Journal of Mathematical Analysis and Applications 430.1 (2015), 483-499.



\bibitem{EMZ21}
H. Ezzahraoui, M. Mbekhta and E. H. Zerouali, \emph{Wold-type decomposition for bi-regular operators}, Acta Sci. Math. (Szeged) 87 (2021), 463-483.


\bibitem{F02}
N. J. Fowler, \emph{Discrete product systems of Hilbert bimodules}, Pacific J. Math.
{204} (2002), no.~2, 335-375.
  \bibitem{F84}
A. E. Frazho, \emph{Complements to models for noncommuting operators}, J. Funct. Anal. 59 (1984), 445-461. 



\bibitem{BCG80}
 B. C. Gupta, \emph{A note on partial sometries. II}, Indian J. Pure Appl. Math. 11
(1980), 208-211.



\bibitem{HW69}
P. R. Halmos and L. J.  Wallen, \emph{Powers of partial isometries}, J. Math. Mech. 19 (1969/1970), 657-663.

\bibitem{K11}
C. S. Kubrusly, \emph{The elements of operator theory}, Birkhauser-Springer, New York, (2011).

\bibitem{L95}
E.~C. Lance, \emph{Hilbert {$C^*$}-modules}, London Mathematical Society
Lecture Note Series, vol. {210}, Cambridge University Press, Cambridge, (1995).


\bibitem{MS98}
P.~S. Muhly and B. Solel, \emph{Tensor algebras over
	{$C^*$}-correspondences: representations, dilations, and {$C^*$}-envelopes},
J. Funct. Anal. {158}(2) (1998), 389-457.

\bibitem{MS99}
P. S. Muhly and B. Solel, \emph{Tensor Algebras, {I}nduced Representations,
	and the {W}old Decomposition}, Canad. J. Math {51}(4) (1999), 850-880.
	
\bibitem{MN36}
F. J. Murray and J. v. Neumann. \emph{On rings of operators}, Annals of Mathematics (1936): 116-229.
	
 

\bibitem{P97}
M. V. Pimsner, \emph{ A class of {$C^*$}-algebras generalizing both
	{C}untz-{K}rieger algebras and crossed products by {${\bf
			Z}$}},
Free probability theory (Waterloo, ON, 1995), 189-212, Fields Inst. Commun., 12, Amer. Math. Soc., Providence, RI, (1997).

\bibitem{POP89}
G. Popescu, \emph{Isometric dilations for infinite sequences of noncommuting
	operators}, Transactions of the American Mathematical Society \textbf{316}
(1989), no.~2, 523--536.


\bibitem{AHS22}
A. Rohilla, S. Veerabathiran and H. Trivedi, \emph{Regular covariant representations and their Wold-type decomposition}, J. Math. Anal. Appl. 522(2) (2023), p.126997, 33.

\bibitem{Saini1}
D. Saini, \emph{Cauchy dual and Wold-type decomposition for bi-regular covariant representations}, Acta Scientiarum Mathematicarum (2024): 1-22.

\bibitem{Ami11}
A. Viselter, \emph{Covariant representations of subproduct systems}, Proc. Lond. Math.
Soc. (3) 102 (2011), no. 4, 767-800.

\bibitem{W38}
H. Wold, \emph{A study in the analysis of stationary time series}, Almquist and Wiksell, Uppsala, 1938.


	
\end{thebibliography}
\end{document}